\def\p{{\overline{p}}}
\def\N{{\overline{N}}}
\def\R{{\overline{R}}}
\newcommand*\bigcdot{\mathpalette\bigcdot@{.6}}
\newcommand*\bigcdot@[2]{\mathbin{\vcenter{\hbox{\scalebox{#2}{$\m@th#1\bullet$}}}}}
\def\N{{\overline{N}}}
\newlength{\boxedparwidth}
\hline \end{tabular} \end{center}}
\newtheorem{thm}{Theorem}[section]
\newtheorem{lem}[thm]{Lemma}
\makeatletter \@addtoreset{equation}{section} \makeatother
\makeindex \setcounter{tocdepth}{2}
\numberwithin{equation}{section}
\begin{document}

\baselineskip 20pt

\begin{center}
{\Large\bf Strict Log-Subadditivity for Overpartition Rank}\\ [7pt]
\end{center}

\vskip 3mm

\begin{center}
Helen W.J. Zhang$^{1,2}$ and Ying Zhong$^{1}$\\[8pt]
$^{1}$School of Mathematics\\
Hunan University\\
Changsha 410082, P. R. China\\[12pt]

$^{2}$Hunan Provincial Key Laboratory of \\
Intelligent Information Processing and Applied Mathematics\\
Changsha 410082, P. R. China\\[15pt]

Emails:  helenzhang@hnu.edu.cn,  YingZhong@hnu.edu.cn
\\[15pt]

\end{center}

\vskip 3mm
\begin{abstract}
Bessenrodt and Ono initially found the strict log-subadditivity of partition function $p(n)$, that is,
$p(a+b)< p(a)p(b)$ for $a,b>1$ and $a+b>9$.
Many other important statistics of partitions are proved to enjoy similar properties.
Lovejoy introduced the overpartition rank as an analog of Dyson's rank for partitions from the $q$-series perspective.
Let $\N(a,c,n)$ denote the number of overpartitions with rank congruent to $a$ modulo $c$.
Ciolan computed the asymptotic formula of $\N(a,c,n)$ and showed that $\N(a, c, n) > \N(b, c, n)$ for $c\geq7$ and $n$ large enough.
In this paper, we derive an upper bound and a lower bound of $\N(a,c,n)$ for each $c\geq3$ by using the asymptotics of Ciolan.
Consequently, we establish the strict log-subadditivity of $\N(a,c,n)$ analogous to the partition function $p(n)$.

\vskip 6pt

\noindent
{\bf Mathematics Subject Classification:} 05A20, 11P82, 11P83
%Combinatorial inequalities, Analytic theory of partitions, Partitions; congruences and congruential restrictions
\\ [7pt]
{\bf Keywords:} Rank, Overpartition, Strict Log-Subadditivity
\end{abstract}

\section{Introduction}
%The objective of this paper is to obtain the strict log-subadditivity for rank of overpartitions.
Strict log-subadditivity phenomena of partition statistics oriented from the work of Bessenrodt and Ono \cite{Bessenrodt-Ono-2016}.
Recall that a partition of a positive integer $n$ is a sequence of non-increasing positive integers whose sum equals $n$. Let $p(n)$ denote the number of partitions of $n$. Bessenrodt and Ono showed that $p(n)$ satisfies the strict log-subadditivity result, that is, for $a,b>1$ and $a+b>9$
\[p(a+b)< p(a)p(b).\]
Since then different partition functions are proven to enjoy the strict log-subadditivity as $p(n)$, for instance, the overpartition function \cite{Liu-Zhang-2021}, the
$k$-regular partition function \cite{Beckwith-Bessenrodt-2016} and the spt-function  \cite{Chen-2017, Dawsey-Masri-2019}.

Strict log-subadditivity of other statistics concerned with partitions comes into sight.
Dyson \cite{Dyson-1944} introduced the rank of a partition to interpret Ramanujan's congruences for $p(n)$ combinatorially.
This is defined as the largest part of the partition minus the number of parts.
Let $N(a,c,n)$  denote the number of partitions of $n$ with rank congruent to $a$ modulo $c$.
Hou and Jagadeesan \cite{Hou-Jagadeesan-2018} showed that
\begin{equation*}
N(r,3,a+b)<N(r,3,a)N(r,3,b)
\end{equation*}
when $r=0$ (resp. $r=1, 2$) and $a+b\geq12$ (resp. $11, 12$).
They also conjectured the general strict log-subadditivity: for $0\leq r<t$ and $t\geq 2$,
\begin{equation}\label{SN}
N(r,t,a+b)<N(r,t,a)N(r,t,b)
\end{equation}
holds for sufficiently large $a$ and $b$. This conjecture is confirmed by Males \cite{Males-2021}.
Gomez and Zhu \cite{Gomez-Zhu-2021} proved that \eqref{SN} holds for $t=2$ and $a,b\geq 12$.
The strict log-subadditivity for other statistics are referred to
\cite{Baker-Males-2022,Cesana-Craig-Males-2021,Hamakiotes-Kriegman-Tsai-2021}.

In this paper,
we show the strict log-subadditivity rank of overpartitions.
Recall that an overpartition of a nonnegative integer $n$ is a partition of $n$ where the first occurrence of each distinct part may be overlined.
Let $\ell(\lambda)$ denote the largest part of $\lambda$, $\#(\lambda)$ denote the number of parts of $\lambda$.
Lovejoy \cite{Lovejoy-2005} defined the rank of an overpaprtition $\lambda$ as
\begin{equation*}
  rank(\lambda)=\ell(\lambda)-\#(\lambda).
\end{equation*}
Let $\overline{N}(a,c,n)$ be the number of overpartitions of $n$ with rank congruent to $a$ modulo $c$.
Our main result as stated as follows.

\begin{thm}\label{N-con}
Given any residue $a\pmod c$, we have
\begin{align}\label{N-con-ineq}
 \N(a,c,n_1+n_2)<\N(a,c,n_1)\N(a,c,n_2)
\end{align}
if $3\leq c\leq 5$ and $n_1,n_2\geq 9$
or if $c\geq6$ and $n_1,n_2\geq M_c$, where
\[M_c=1.691\times10^{13}c^{20}\exp\left(\frac {16c^2(2c^2+\pi)}{\pi^2}\right).\]
\end{thm}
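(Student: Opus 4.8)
The plan is to follow the now-standard Bessenrodt--Ono strategy: reduce the strict log-subadditivity inequality \eqref{N-con-ineq} to an elementary comparison of explicit elementary functions of $n_1,n_2$ by sandwiching $\N(a,c,n)$ between a clean lower bound and a clean upper bound coming from Ciolan's asymptotic formula. Concretely, I would first record Ciolan's asymptotics for $\N(a,c,n)$, which has main term of the shape $\frac{\text{const}}{n}\,e^{\pi\sqrt{n}}$ (up to the $c$-dependent constants and lower-order corrections), together with an effective error term. From this I would extract, for each $c\ge 3$, constants $A_c>0$, $B_c>0$ and a threshold such that
\[
\frac{A_c}{n}\,e^{\pi\sqrt{n}}\ \le\ \N(a,c,n)\ \le\ \frac{B_c}{n}\,e^{\pi\sqrt{n}}
\]
for $n$ beyond that threshold; making the error term effective (bounding the tail of the Rademacher-type expansion, or controlling the implied constant in Ciolan's $O$-term) is exactly the technical heart of the argument.

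Granting the two-sided bound, the inequality to prove becomes
\[
\frac{B_c}{n_1+n_2}\,e^{\pi\sqrt{n_1+n_2}}\ <\ \frac{A_c^2}{n_1 n_2}\,e^{\pi(\sqrt{n_1}+\sqrt{n_2})},
\]
which after taking logarithms reduces to
\[
\pi\bigl(\sqrt{n_1}+\sqrt{n_2}-\sqrt{n_1+n_2}\bigr)\ +\ \log\frac{n_1 n_2}{n_1+n_2}\ >\ \log\frac{B_c}{A_c^2}.
\]
The left side is increasing and unbounded in $n_1,n_2$ (the gap $\sqrt{n_1}+\sqrt{n_2}-\sqrt{n_1+n_2}$ grows like $\min(\sqrt{n_1},\sqrt{n_2})$), so the inequality holds once $\min(n_1,n_2)$ exceeds a threshold depending only on $B_c/A_c^2$. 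Tracking the dependence of $A_c,B_c$ on $c$ through Ciolan's formula — where the dominant $c$-dependence sits in an exponential factor of size roughly $\exp\!\big(\tfrac{c^2}{\pi}\cdot\text{const}\big)$ coming from the leading Kloosterman-type / principal-part contribution — is what produces the stated bound $M_c=1.691\times10^{13}c^{20}\exp\!\big(\tfrac{16c^2(2c^2+\pi)}{\pi^2}\big)$; the polynomial prefactor $c^{20}$ and the numerical constant come from bundling together the various secondary factors.

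For the small cases $3\le c\le 5$ the uniform bound $M_c$ would be far too large to give the clean threshold $n_1,n_2\ge 9$, so these are handled separately: one uses sharper (case-by-case) numerical lower and upper bounds for $\N(a,c,n)$ valid for $n$ past a small explicit point, checks the reduced inequality above directly for $\min(n_1,n_2)$ past that point, and then verifies the finitely many remaining pairs $(n_1,n_2)$ with small coordinates by direct computation of $\N(a,c,n)$. I expect the main obstacle to be the \emph{effectivization} step: Ciolan's published statement is asymptotic, so one must either re-derive it with an explicit error bound (controlling the non-principal terms of the Hardy--Ramanujan--Rademacher-type series and the Bessel-function tails uniformly in $a$ and in $n$) or locate such an effective version, and then carefully propagate the $c$-dependence of every constant so that the final threshold is genuinely of the claimed form. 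Once that effective two-sided estimate is in hand, the remainder is the routine monotonicity/calculus argument plus a finite check.
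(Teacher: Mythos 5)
Your proposal follows essentially the same route as the paper: Ciolan's formula is already effective (his 2021 paper bounds every error piece explicitly), so the paper's work consists of propagating the $c$-dependence of those bounds to obtain $\alpha_c\,\p(n)<\N(a,c,n)<\beta_c\,\p(n)$, combining this with Engel's explicit estimate $\frac{1}{8n}\left(1\mp n^{-1/2}\right)e^{\pi\sqrt{n}}$ for $\p(n)$, and then running exactly your logarithmic comparison (with the substitution $n_2=Cn_1$ and a monotonicity reduction to $C=1$), plus a finite numerical check for $3\leq c\leq 5$. One small correction: after taking logarithms the term $\log\frac{n_1n_2}{n_1+n_2}$ must appear on the right-hand side (or enter with a minus sign on the left), since the product bound carries $n_1n_2$ in its denominator; this does not affect your conclusion, as $\pi\left(\sqrt{n_1}+\sqrt{n_2}-\sqrt{n_1+n_2}\right)$ still dominates $\log\min(n_1,n_2)$.
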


This paper is organized as follows.
In Section \ref{asy-2},
we state the asymptotic behavior of $\N(a,c,n)$ due to Ciolan.
In Section \ref{b-esti}, we estimate the upper bounds for both main terms and error terms in the asymptotic formula.
Based on the estimates, we finally prove Theorem \ref{N-con} in Section \ref{proof-thm}.

\section{Asymptotic formula of Ciolan}\label{asy-2}

Our approach to the strict log-subadditivity of overpartition rank relies on the asymptotic behavior of generating function of $\N(a,c,n)$.
In this section, we recall the asymptotic formula due to Ciolan \cite{Ciolan-2019,Ciolan-2021}.

We begin with some notations that we need.
Let
\begin{equation*}
  \omega_{h,k}:=\exp(\pi i s(h,k)),
\end{equation*}
where the Dedekind sum $s(h,k)$ is defined by
\begin{equation*}
  s(h,k):=\sum_{u~(\mathrm{mod} ~k)} \left(\left(\frac uk\right)\right)\left(\left(\frac {hu}k\right)\right).
\end{equation*}
Here $((\cdot ))$ is the sawtooth function defined by
\begin{equation*}
 ((x)):=\begin{cases}
 x-\lfloor x \rfloor-\frac 12, &\text{if} ~~x\in \mathbb{R}\setminus\mathbb{Z},\\
 0, & \text{if} ~~x\in \mathbb{Z}.
 \end{cases}
\end{equation*}
In what follows, $0\leq h<k$ are coprime integers, and $h'\in \mathbb{Z}$ is defined by the congruence $hh'\equiv 1$ (mod $k$). Further, let $0<a<c$ are coprime positive integers with $c>2$, $c_1:=\frac
c{(c,k)}$ and $k_1:=\frac k{(c,k)}$. Let the integer $0\leq l<c_1$ be the solution to $l\equiv ak_1$ (mod $c_1$).

We then introduce several Kloosterman sums. Here and throughout we write $\sum_h^{'}$ to denote summation over the integers $0\leq h<k$ that are coprime to $k$.

If $c|k$, let
\begin{equation*}
  A_{a,c,k}(n,m):=(-1)^{k_1+1} \tan\left(\frac {\pi a}c\right) {\sum_h}' \frac {\omega_{h,k}^2}{\omega_{h,k/2}}\cdot\cot\left(\frac {\pi ah'}c\right)\cdot e^{-\frac {2\pi ih'a^2k_1}c} \cdot e^{\frac {2\pi i}k (nh+mh')},
\end{equation*}
and
\begin{equation*}
  B_{a,c,k}(n,m):=-\frac 1{\sqrt{2}} \tan\left(\frac {\pi a}c\right) {\sum_h}' \frac {\omega_{h,k}^2}{\omega_{2h,k}}\cdot \frac 1{\sin\left(\frac {\pi ah'}c\right)}\cdot e^{-\frac {2\pi ih'a^2k_1}c} \cdot e^{\frac {2\pi i}k (nh+mh')}.
\end{equation*}
If $c\nmid k$ and $0<\frac l{c_1}\leq \frac 14$, let
\begin{equation*}
 D_{a,c,k}(n,m):=\frac 1{\sqrt{2}} \tan\left(\frac {\pi a}c\right) {\sum_h}' \frac {\omega_{h,k}^2}{\omega_{2h,k}} \cdot e^{\frac {2\pi i}k (nh+mh')},
\end{equation*}
and if $c\nmid k$ and $\frac 34<\frac l{c_1}< 1$, let
\begin{equation*}
 D_{a,c,k}(n,m):=-\frac 1{\sqrt{2}} \tan\left(\frac {\pi a}c\right) {\sum_h}' \frac {\omega_{h,k}^2}{\omega_{2h,k}} \cdot e^{\frac {2\pi i}k (nh+mh')}.
\end{equation*}
Finally, if $c\nmid k$, set
\begin{equation*}
  \delta_{c,k,r}:=\begin{cases}
  \frac 1{16}-\frac l{2c_1}+\frac {l^2}{c_1^2}-r\frac l{c_1}, &\text{if}~~0<\frac l{c_1}\leq \frac 14,\\[3pt]
  0,&\text{if}~~\frac 14<\frac l{c_1}\leq \frac 34,\\[3pt]
  \frac 1{16}-\frac {3l}{2c_1}+\frac {l^2}{c_1^2}+\frac 12-r\left(1-\frac l{c_1}\right), &\text{if}~~\frac 34<\frac l{c_1}< 1,
  \end{cases}
\end{equation*}
and
\begin{equation*}
  m_{a,c,k,r}:=\begin{cases}
  -\frac 1{2c_1^2}(2(ak_1-l)^2+c_1(ak_1-l)+2rc_1(ak_1-l)), &\text{if}~~0<\frac l{c_1}\leq \frac 14,\\[3pt]
  0,&\text{if}~~\frac 14<\frac l{c_1}\leq \frac 34,\\[3pt]
  -\frac 1{2c_1^2}(2(ak_1-l)^2+3c_1(ak_1-l)-2rc_1(ak_1-l)-c_1^2(2r-1)), &\text{if}~~\frac 34<\frac l{c_1}< 1.
  \end{cases}
\end{equation*}

%We adopt the common $q$-series notation \cite{Andrews-1976}:
%\begin{equation*}
%  (a;q)_\infty=\prod_{n=0}^{\infty}(1-aq^n)~~\text{and}~~(a;q)_n=\frac {(a;q)_\infty}{(aq^n;q)_\infty}.
%\end{equation*}
Let $\N(m,n)$ denote the number of overpartitions of $n$ with rank $m$.
We adopt the notation $\mathcal{O}(u; q)$ in \cite{Lovejoy-2005} to denote the generating function of $\N(m,n)$, that is,
 \begin{align}\label{O-u}
\mathcal{O}(u; q):=\sum_{n\geq0}\sum_{m}\N(m, n)u^mq^n.
\end{align}
If $0<a<c$ are coprime positive integers, and if by $\zeta_{n}=e^{\frac{2 \pi i}{n}}$ we denote the primitive $n$-th root of unity, we set
\begin{align}\label{O-a/c}
\mathcal{O}\left(\frac{a}{c} ; q\right):=\mathcal{O}\left(\zeta_{c}^{a} ; q\right)=1+\sum_{n\geq 1} A\left(\frac{a}{c} ; n\right) q^{n}.
\end{align}

Ciolan obtained the following asymptotic formula for $A\left(\frac ac;n\right)$.
\begin{thm}{\rm{ \cite[ Theorem 1]{Ciolan-2019} }}\label{AA-thm}
If $0 <a <c$ are coprime positive integers with $c >2$, and $\varepsilon>0$ is arbitrary, then
\begin{align}\label{AA}
A\left(\frac ac;n\right)=&i\sqrt{\frac 2n}\sum_{\substack{1\leq k\leq \sqrt{n}\\[2pt] c|k, ~2\nmid k}}\frac {B_{a,c,k}(-n,0)}{\sqrt{k}} \cdot\sinh\left(\frac {\pi \sqrt{n}}k\right)
\nonumber\\[6pt]
&+2\sqrt{\frac 2n}\sum_{\substack{1\leq k\leq \sqrt{n}\\[2pt] c|k,~ 2\nmid k,~ c_1\neq 4\\[2pt] r\geq0,~ \delta_{c,k,r}>0}}\frac {D_{a,c,k}(-n,m_{a,c,k,r})}{\sqrt{k}} \cdot\sinh\left(\frac {4\pi \sqrt{\delta_{c,k,r}n}}k\right)+O(n^{\varepsilon}).
\end{align}
\end{thm}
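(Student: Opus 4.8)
\textbf{Proof proposal for Theorem~\ref{AA-thm}.}

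The plan is to prove Theorem~\ref{AA-thm} by the Hardy--Ramanujan--Rademacher circle method in its Farey-dissection form, adapted to mock modular forms as in Bringmann's treatment of the partition rank. First, by Cauchy's theorem, for a circle $\mathcal{C}$ of radius $e^{-2\pi/n}$ about the origin,
\[
A\!\left(\tfrac ac;n\right)=\frac{1}{2\pi i}\oint_{\mathcal{C}}\frac{\mathcal{O}(\zeta_c^a;q)-1}{q^{\,n+1}}\,dq ,
\]
and one dissects $\mathcal{C}$ into Farey arcs about the fractions $h/k$ with $0\le h<k\le\lfloor\sqrt n\rfloor$ and $\gcd(h,k)=1$. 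On the arc around $h/k$ one sets $q=\exp(2\pi i h/k-2\pi z/k)$ with $z$ ranging over the arc, so that the natural local coordinate at the cusp $h/k$ is the nome $q'=\exp(2\pi i h'/k-2\pi/(kz))$, with $h'$ as in the statement.

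The core step is the modular transformation of $\mathcal{O}(\zeta_c^a;q)$ at each cusp $h/k$. Using the Appell--Lerch-type representation of $\mathcal{O}(z;q)$ and the Bringmann--Lovejoy description of $\overline{N}(m,n)$ through a weight-$\tfrac12$ harmonic Maass form, one writes $\mathcal{O}(\zeta_c^a;q)$ as the holomorphic part of a modular completion $\widehat{\mathcal{O}}$ minus the nonholomorphic Eichler integral of a weight-$\tfrac32$ unary theta function; $\widehat{\mathcal{O}}$ transforms under $\mathrm{SL}_2(\mathbb Z)$ with a multiplier involving the Dedekind sums $\omega_{h,k}$. Inserting this transformation into each Farey arc and letting $z\to 0$ yields a principal part whose shape is governed by $\gcd(c,k)$: when $c\mid k$ and $k$ is odd, the residues of the underlying Lerch sum at $zq^{\,j}=1$ combine --- after summing the multiplier over the $h$ coprime to $k$ --- into Kloosterman sums of types $A_{a,c,k}$ and $B_{a,c,k}$, with effective $q'$-exponent $-\tfrac1{16}$; when $c\nmid k$, a theta decomposition into residue classes modulo $c_1$ isolates the class with $l\equiv ak_1\pmod{c_1}$, which contributes a principal part $q'^{-\delta_{c,k,r}}$ together with a shift of the $q$-argument by $m_{a,c,k,r}$ and produces $D_{a,c,k}$. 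Determining which theta component survives, and pinning down the arithmetic of $\delta_{c,k,r}$ and $m_{a,c,k,r}$ (including the case split on the parity of $k$ and on $c_1$, with $c_1=4$ excluded), is where the bulk of the work lies.

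Next, on each major arc one replaces $\mathcal{O}$ by its principal part and integrates term by term. A piece $q'^{-\delta}$ integrated against $q^{-n-1}$ over the arc evaluates, by the standard saddle-point estimate together with the identity $I_{1/2}(x)=\sqrt{2/(\pi x)}\,\sinh x$ coming from the weight $\tfrac12$, to a constant times $k^{-1/2}n^{-1/2}\sinh\!\big(4\pi\sqrt{\delta n}/k\big)$, which for $\delta=\tfrac1{16}$ is $k^{-1/2}n^{-1/2}\sinh(\pi\sqrt n/k)$. Reassembling these contributions with the overall constants $i\sqrt{2/n}$ and $2\sqrt{2/n}$ reproduces the two sums in \eqref{AA}. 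It then remains to control the discarded terms: the arcs with $k>\lfloor\sqrt n\rfloor$, the tails of the principal-part expansions, and, crucially, the nonholomorphic Eichler integral, which after the cusp transformation becomes a convergent period integral of at most polynomial growth in $n$; uniform bounds for $|\mathcal{O}(\zeta_c^a;q)|$ near $\partial\mathbb D$, obtained from estimates on the eta-quotient and the Lerch piece, then show the total is $O(n^\varepsilon)$. I expect the main obstacle to be precisely the cusp analysis of the second step --- obtaining the exact transformation of $\mathcal{O}(\zeta_c^a;q)$ at every $h/k$ with the correct multiplier system and all principal parts, and carrying out the theta-decomposition that singles out the class $l\pmod{c_1}$ and fixes $\delta_{c,k,r}$ and $m_{a,c,k,r}$ --- since this interlocks the Zwegers-type transformation of Appell--Lerch sums with the harmonic Maass form structure and is complicated by the dependence on the parity of $k$ and on $\gcd(c,k)$.
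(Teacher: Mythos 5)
The paper does not prove this statement at all: it is quoted verbatim as Theorem 1 of Ciolan (J.\ Math.\ Anal.\ Appl.\ 2019), and the present authors only use it as an input for their bounds. So the comparison is really with Ciolan's original argument, and there your plan is essentially the right one: Ciolan proves the formula by the Hardy--Ramanujan--Rademacher circle method with Farey dissection of order $\lfloor\sqrt n\rfloor$, using the transformation behaviour of $\mathcal{O}(\zeta_c^a;q)$ at the cusps $h/k$ (built on the Bringmann--Lovejoy harmonic Maass form structure, in the spirit of Bringmann's treatment of Dyson's rank), with the case split on $\gcd(c,k)$ producing the Kloosterman-type sums $A,B,D$, the exponents $\delta_{c,k,r}$ and shifts $m_{a,c,k,r}$ from a theta-type decomposition modulo $c_1$, the $\sinh$ factors arising from $I_{1/2}$-Bessel integrals on the major arcs, and the nonholomorphic (Mordell-integral/Eichler-integral) pieces absorbed into the $O(n^\varepsilon)$ error. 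One small point in your favour: you correctly place the $D_{a,c,k}$ contribution in the case $c\nmid k$, which is how $D_{a,c,k}$ is defined in Section 2 and how it is used in the proof of Theorem 3.2 (the condition ``$c\mid k$'' under the second sum in \eqref{AA} as printed is a typo).

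That said, what you have written is a proof plan, not a proof, and you say so yourself. Everything that actually constitutes Ciolan's work is deferred: the exact transformation formulas for $\mathcal{O}(\zeta_c^a;q)$ at every cusp with the correct multiplier, the residue/theta analysis that singles out the class $l\equiv ak_1\pmod{c_1}$ and yields the precise values of $\delta_{c,k,r}$ and $m_{a,c,k,r}$ (including the three ranges of $l/c_1$ and the exclusion $c_1\neq 4$), the evaluation of the resulting major-arc integrals, and the uniform estimates showing the discarded pieces are $O(n^\varepsilon)$. None of these steps is wrong in outline, but none is carried out, so the attempt cannot be credited as a proof of \eqref{AA}; it is an accurate roadmap to the argument in the cited reference.
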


To obtain the inequality of $\N(a,c,n)$, Ciolan \cite{Ciolan-2021} bounded the main term and the error term in above theorem.
Here we list these bounds respectively.
\begin{itemize}
  \item Bounds in the main terms:
  Ciolan \cite[\S~4.2]{Ciolan-2021} gave upper bounds for Kloosterman sums and hyperbolic sine function in \eqref{AA}.
  \begin{align}\label{Main-c-1}
 &\left|B_{j,c,k}(-n,0)\right|\leq \cot\left(\frac {\pi}{2c}\right)\frac {2k\left(1+\log\left(\frac {c-1}2\right)\right)}{\pi\left(1-\frac{\pi^2}{24}\right)},
\\[6pt] \label{Main-c-2}
&\left|D_{j,c,k}(-n,m_{j,c,k,r})\right|\leq \frac k{\sqrt{2}}\cot\left(\frac {\pi}{2c}\right),
\\[6pt] \label{Main-c-3}
 &\sinh\left(\frac {4\pi \sqrt{\delta_{c,k,r}n}}k\right)\leq \sinh \left(\pi \sqrt{n}\left(1-\frac 4c\right)\right).
\end{align}
  \item Bounds in the error terms: the error term $O(n^{\varepsilon})$ is constituted of $S_1, S_2, \ldots, S_8$, the error terms of $\sum_1, \sum_2, \ldots, \sum_8$ respectively.
Here we strictly follows the notations of Ciolan in \cite[\S~4.3]{Ciolan-2021}.
Let $\p(n)$ denote the number of overpartitions of $n$. Then
%Ciolan obtain eight estimations of the error term arising from the Circle Method
\begin{equation}
\begin{split}
&S_1<4C_3e^{2\pi}\cot\left(\frac {\pi}{2c}\right)\sum_k k^{-\frac 12},
\qquad S_2<4C_1e^{2\pi}\sqrt{2}\cot\left(\frac {\pi}{2c}\right)\sum_k k^{-\frac 12},
\\[6pt]
&S_3<2C_4e^{2\pi}\cot\left(\frac {\pi}{2c}\right)\sum_k k^{-\frac 12},
\qquad S_4<C_5e^{2\pi}\cot\left(\frac {\pi}{2c}\right)\sum_k k^{-\frac 12},
\\[6pt]
&S_5<C_2e^{2\pi}\sqrt{2}\cot\left(\frac {\pi}{2c}\right)\sum_k k^{-\frac 12},
\quad S_6<C_2e^{2\pi}\frac 1{\sqrt{2}}\cot\left(\frac {\pi}{2c}\right)\sum_k k^{-\frac 12},
\\[6pt]
&S_7<\frac {n^{\frac 34}\log\left(\frac n4\right)}{2\pi\left(1-\frac{\pi^2}{24}\right)\sin\left(\frac {\pi}c\right)},
\qquad\qquad S_8<\frac {n^{\frac 34}\log\left(\frac n4\right)}{2\pi\left(1-\frac{\pi^2}{24}\right)\sin\left(\frac {\pi}c\right)},
\end{split}
\label{error-c}
\end{equation}
where
\begin{align}\label{C2}
&C_1=\sum_{r\geq 1}\p(r)\left(e^{-\frac {(16r-1)\pi}{16}}+e^{-\frac {(16r+7)\pi}{16}}\right),
\quad
C_2=2\sum_{r\geq 1}\p(r)e^{-\frac {(c^2-8)\pi r}{16c^2}},
\\[3pt] \label{C4}
&C_3=\sum_{r\geq 1}\p(r)e^{-\pi r},
\quad
C_4=\sum_{r\geq 1}\p(r)e^{-\frac {\pi r}{2c^2}},
\quad
C_5=\sum_{r\geq 1}\p(r)e^{-\frac {\pi (2r+1)}8}.
 \end{align}
To our need, we adopt the following notations. The errors arising from integrating over the remaining parts of the interval are denoted by $S_{2err}, S_{5err}, S_{6err}$,
and errors introduced by integrating along a smaller arc are denoted by $I_{2err}, I_{5err}, I_{6err}$.
Ciolan \cite[\S~4.3]{Ciolan-2021} provided the following bounds on each of those pieces:
\begin{align}
S_{2err}&<\sqrt{2}e^{2\pi+\frac {\pi}8}\cot\left(\frac {\pi}{2c}\right)n^{-\frac 12}\frac {1+\log\left(\frac {c-1}2\right)}{\pi\left(1-\frac{\pi^2}{24}\right)}\sum_k k^{-\frac 12},
\nonumber\\[6pt]
\label{error-c2}
S_{5err}&<2\sqrt{2}e^{2\pi+\frac {\pi}8}\cot\left(\frac {\pi}{2c}\right)n^{-\frac 12}\frac {1+\log\left(\frac {c-1}2\right)}{\pi\left(1-\frac{\pi^2}{24}\right)}\sum_k k^{-\frac 12},
\\[6pt]
\nonumber
S_{6err}&<\sqrt{2}e^{2\pi+\frac {\pi}8}\cot\left(\frac {\pi}{2c}\right)n^{-\frac 12}\frac {1+\log\left(\frac {c-1}2\right)}{\pi\left(1-\frac{\pi^2}{24}\right)}\sum_k k^{-\frac 12}
\end{align}
and
\begin{align}
I_{2err}&<2\sqrt{2}\left(\frac 43+2^{\frac 54}\right)e^{2\pi+\frac {\pi}8}\cot\left(\frac {\pi}{2c}\right)\frac {1+\log\left(\frac {c-1}2\right)}{\pi\left(1-\frac{\pi^2}{24}\right)}n^{\frac 14},
\nonumber\\[6pt]
\label{error-c3}
I_{5err}&<4\sqrt{2}\left(\frac 43+2^{\frac 54}\right)e^{2\pi+\frac {\pi}8}\cot\left(\frac {\pi}{2c}\right)\frac {1+\log\left(\frac {c-1}2\right)}{\pi\left(1-\frac{\pi^2}{24}\right)}n^{\frac 14},
\\[6pt]
\nonumber
I_{6err}&<2\sqrt{2}\left(\frac 43+2^{\frac 54}\right)e^{2\pi+\frac {\pi}8}\cot\left(\frac {\pi}{2c}\right)\frac {1+\log\left(\frac {c-1}2\right)}{\pi\left(1-\frac{\pi^2}{24}\right)}n^{\frac 14}.
\end{align}
\end{itemize}

%\section{Effective Asymptotic formula for $\N(a,c;n)$}
\section{Bounds for $\N(a,c,n)$}\label{b-esti}
In this section,
we show that $\N(a,c,n)$ can be bounded by a constant multiple of $\p(n)$ for each $c\geq3$.

\begin{thm}\label{bound-N}
We have
\begin{align}\label{bound-N-3}
&0.0019~\p(n)<\N(a,3,n)<0.6648~\p(n),~~n\geq2089,
\\[6pt] \label{bound-N-4}
& 0.0091~\p(n)<\N(a,4,n)<0.4909~\p(n),~~n\geq 272,
\\[6pt] \label{bound-N-5}
&0.0103~\p(n)<\N(a,5,n)<0.3897~\p(n),~~n\geq 449.
\end{align}
For $c\geq6$ we define the constant
\begin{equation}\label{Mc-bound}
  M_c:=1.691\times10^{13}c^{20}\exp\left(\frac {16c^2(2c^2+\pi)}{\pi^2}\right),
\end{equation}
we have
\begin{align}\label{bound-N-c}
 \left(\frac 1{2c}\right)\p(n)< \N(a,c,n)< \left(\frac 3{2c}\right)\p(n),~~n\geq M_c.
\end{align}
\end{thm}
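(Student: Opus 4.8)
The plan is to deduce everything from an effective upper bound on the twisted coefficients appearing in Ciolan's formula, together with an effective lower bound on $\p(n)$. First I would record the roots of unity dissection. Extend the notation of \eqref{O-a/c} by letting $A(j/c;n)$ denote the coefficient of $q^n$ in $\mathcal O(\zeta_c^j;q)$ for \emph{every} integer $j$, so that $A(0/c;n)=\p(n)$. Orthogonality of the $c$-th roots of unity then gives
\[
\N(a,c,n)=\frac1c\,\p(n)+\frac1c\sum_{j=1}^{c-1}\zeta_c^{-aj}\,A\!\left(\tfrac jc;n\right),
\qquad\text{so}\qquad
\Bigl|\N(a,c,n)-\tfrac1c\,\p(n)\Bigr|\le\frac1c\sum_{j=1}^{c-1}\Bigl|A\!\left(\tfrac jc;n\right)\Bigr|.
\]
Hence \eqref{bound-N-c} follows once $\sum_{j=1}^{c-1}|A(j/c;n)|<\tfrac12\p(n)$ for $n\ge M_c$, and \eqref{bound-N-3}--\eqref{bound-N-5} follow from the analogous but weaker inequalities with $\tfrac12$ replaced by the slack the stated constants permit. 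Everything thus reduces to bounding each $|A(j/c;n)|$.

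Next I would push each twisted coefficient into Ciolan's range. With $d=\gcd(j,c)$, $c_0=c/d$ and $j_0=j/d$ one has $\zeta_c^{j}=\zeta_{c_0}^{j_0}$, hence $A(j/c;n)=A(j_0/c_0;n)$ with $\gcd(j_0,c_0)=1$. For $c_0\ge3$ I would substitute into \eqref{AA}, estimate the Kloosterman sums and hyperbolic sines via \eqref{Main-c-1}--\eqref{Main-c-3}, bound the error pieces via \eqref{error-c}, \eqref{error-c2} and \eqref{error-c3}, and sum the geometric-type contributions over $1\le k\le\sqrt n$ (and, for the $D$-terms, over the admissible $r$). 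This produces an estimate of the shape
\[
\Bigl|A\!\left(\tfrac{j_0}{c_0};n\right)\Bigr|\ \le\ P_1(c_0)\,n^{1/4}\exp\!\bigl(\pi\sqrt n\,\beta_{c_0}\bigr)+P_2(c_0)\,n^{3/4}\log n,
\]
where $\beta_{c_0}<1$ is the largest exponent surviving in \eqref{AA} --- it is $1/c_0$ from the $B$-sum when $c_0$ is odd, and at most $1-4/c_0$ from the $D$-sum by \eqref{Main-c-3} --- and $P_1,P_2$ are explicit, built from $\cot(\pi/2c_0)$, $1/\sin(\pi/c_0)$, $1+\log\tfrac{c_0-1}{2}$, the constants $C_1,\dots,C_5$ of \eqref{C2}--\eqref{C4}, and the number of admissible pairs $(k,r)$. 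The only case outside Ciolan's hypothesis $c>2$ is $c_0=2$, which occurs exactly when $c$ is even (the term $j=c/2$) and amounts to the coefficients of $\mathcal O(-1;q)$; these are known to grow at a strictly slower exponential rate than $\p(n)$, so I would bound that term separately, by a quantity of size $O\bigl(\p(n)\,e^{-c'\sqrt n}\bigr)$ with $c'>0$ absolute.

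Then I would assemble the pieces. Grouping $\sum_{j=1}^{c-1}$ according to $d=\gcd(j,c)$ turns it into $\sum_{e\mid c,\,e\ge2}\ \sum_{1\le j_0<e,\,\gcd(j_0,e)=1}\bigl|A(j_0/e;n)\bigr|$; since $\sum_{e\mid c}\varphi(e)=c$ and $\beta_e\le\beta_c=1-4/c$ for every divisor $e\mid c$ (using $c\ge6$), this is at most $c\bigl(P_1(c)\,n^{1/4}e^{\pi\sqrt n(1-4/c)}+P_2(c)\,n^{3/4}\log n\bigr)$ plus the negligible $c_0=2$ contribution. Dividing by a classical effective lower bound $\p(n)\ge\gamma\,n^{-1}e^{\pi\sqrt n}$ (with explicit $\gamma>0$ valid beyond an explicit $n$) and requiring the quotient to be $<\tfrac12$ reduces the claim to an explicit inequality of the form
\[
\kappa_1(c)\,n^{5/4}e^{-4\pi\sqrt n/c}+\kappa_2(c)\,n^{7/4}(\log n)\,e^{-\pi\sqrt n}<\tfrac12,
\]
which, since $4/c>0$, holds for every $n$ past an explicit bound; solving it yields the stated $M_c$. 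For $c=3,4,5$ I would run the same computation with $c$ fixed --- now $\beta_c\in\{\tfrac13,\,0,\,\tfrac15\}$, the $e=2$ divisor (present only for $c=4$) is handled as above, and all constants are concrete numbers --- and optimize the slack to get the sharper pairs $(0.0019,0.6648)$, $(0.0091,0.4909)$, $(0.0103,0.3897)$ and the thresholds $2089$, $272$, $449$, checking the finitely many small $n$ below the threshold against tables of $\p(n)$ and $\N(a,c,n)$ if needed.

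The hard part will be the uniform bookkeeping of the $c$-dependence: one must keep every constant explicit and uniform --- the $C_i$, some of which are large (for instance $C_4=\sum_r\p(r)e^{-\pi r/(2c^2)}$ grows with $c$), the factor $\cot(\pi/2c)$, the twist count $\varphi(e)$, and the number of admissible $(k,r)$ --- and then verify that the one ``bad'' exponential $e^{\pi\sqrt n(1-4/c)}$, together with the $e^{\pi\sqrt n/c}$ coming from odd divisors, is dominated by the $e^{\pi\sqrt n}$ of $\p(n)$ with enough room to absorb all polynomial and $c$-dependent factors; this is exactly what forces the (very conservative) value of $M_c$. A secondary technical point is the $c_0=2$ twist for even $c$, which falls outside Ciolan's range and must be handled by hand.
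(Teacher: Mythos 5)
Your proposal is correct and follows essentially the same route as the paper: the orthogonality decomposition $\N(a,c,n)=\frac1c\p(n)+\frac1c\sum_{j=1}^{c-1}\zeta_c^{-aj}A(j/c;n)$, explicit bounds on the main and error terms of Ciolan's formula via \eqref{Main-c-1}--\eqref{Main-c-3}, \eqref{error-c}--\eqref{error-c3}, an effective two-sided bound on $\p(n)$ (the paper uses Engel's formula to get $\frac{1}{8n}(1\pm\frac{1}{\sqrt n})e^{\pi\sqrt n}$), and a final comparison of the form $\kappa_1(c)n^{5/4}e^{-4\pi\sqrt n/c}+\dots<\frac{1}{2c}$ yielding $M_c$. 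Your explicit reduction of non-coprime twists $A(j/c;n)=A(j_0/c_0;n)$ and separate treatment of the $c_0=2$ case is a point the paper glosses over rather than a different method.
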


% $\N(a,c,n)$.
%First, we refine Ciolan's analysis.
%Then, we give an asymptotic formula with an effective bound.
%%Then we investigate the error terms detailedly to give a refinement of this asymptotic formula.
Our main mission of this section is to prove the above theorem.
We sketch our strategy here.
Setting $u=\zeta_c^a$ in \eqref{O-u}, we have
\begin{align}\label{N-g}
\mathcal{O}\left(\frac{a}{c} ; q\right)=\sum_{n\geq0} \sum_{m}\N(m, n)\zeta_c^{am}q^n.
\end{align}
Comparing the coefficient of $q^n$ in \eqref{O-a/c}, \eqref{N-g} and with the orthogonality of the roots of unity, we obtain
\begin{equation}\label{N-id}
  \N(a,c,n)=\frac{1}{c}\p(n)+\frac 1c\sum_{1\leq j\leq c-1}\zeta_c^{-aj}A\left(\frac jc;n\right).
\end{equation}
The second summation in right-hand side of \eqref{N-id} is denoted by $\R(a,c,n)$, that is,
\begin{align}\label{N-R}
\N(a,c,n)=\frac{1}{c}\p(n)+\R(a,c,n).
\end{align}
Due to \eqref{N-R}, we need to estimate $\R(a,c,n)$ and $\p(n)$ separately.
For $\R(a,c,n)$,
we need to give a bound to the main terms as well as the error terms in view of Theorem \ref{AA-thm}.
These are completed in Theorem \ref{Main-term-R} and Theorem \ref{Error-term-R} respectively.
After then, we give an upper bound and lower bound for $\p(n)$ in Theorem \ref{p-bound-thm} based on the asymptotic formula of Engel \cite{Engel-2017}.

From the view of Theorem \ref{AA-thm},
we have the following estimates to bound the main terms of $\R(a,c,n)$.

\begin{lem}
For integer $c >2$, then we have
\begin{align}\label{Main-1}
&\left|B_{j,c,k}(-n,0)\right|\leq 0.3444 k c^2,
\\[6pt] \label{Main-2}
&\left|D_{j,c,k}(-n,m_{j,c,k,r})\right|\leq 0.4503kc,
\\[6pt] \label{Main-3}
&\sinh\left(\frac {4\pi \sqrt{\delta_{c,k,r}n}}k\right)\leq
\frac{1}{2}e^{\pi \sqrt{n}\left(1-\frac 4c\right)}.
\end{align}
\end{lem}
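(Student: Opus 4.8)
The plan is to prove the three estimates \eqref{Main-1}, \eqref{Main-2} and \eqref{Main-3} by converting the cotangent factors and the hyperbolic-sine bounds coming from Ciolan's inequalities \eqref{Main-c-1}--\eqref{Main-c-3} into the cleaner, fully explicit constants stated in the lemma. The input is already almost exactly what we want: Ciolan's bounds are phrased in terms of $\cot\left(\frac{\pi}{2c}\right)$ and the factor $1+\log\left(\frac{c-1}{2}\right)$, and all that remains is to show these are dominated by $c^2$ (respectively $c$) times the numerical constants $0.3444$, $0.4503$, $\tfrac12$. Since the index $j$ runs over $1\le j\le c-1$, and in Ciolan's notation $\tan\left(\frac{\pi j}{c}\right)$, $\cot\left(\frac{\pi j h'}{c}\right)$, $\frac{1}{\sin\left(\frac{\pi j h'}{c}\right)}$ all appear, the first point is to recall why the uniform bound over $j$ is governed by the extreme value $\cot\left(\frac{\pi}{2c}\right)$ — this is precisely the reduction Ciolan carries out in \S~4.2, so I may quote \eqref{Main-c-1}--\eqref{Main-c-3} directly and only need to bound $\cot\left(\frac{\pi}{2c}\right)$ and the logarithmic factor.

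The key steps, in order. First, establish the elementary inequality $\cot x < \frac{1}{x}$ for $0<x<\frac{\pi}{2}$, hence $\cot\left(\frac{\pi}{2c}\right) < \frac{2c}{\pi}$ for every integer $c\ge 3$. Second, for \eqref{Main-2} combine this with \eqref{Main-c-2}: $\left|D_{j,c,k}(-n,m_{j,c,k,r})\right| \le \frac{k}{\sqrt2}\cot\left(\frac{\pi}{2c}\right) < \frac{k}{\sqrt2}\cdot\frac{2c}{\pi} = \frac{\sqrt2\,kc}{\pi} < 0.4503\,kc$, since $\frac{\sqrt2}{\pi} = 0.45015\ldots$. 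Third, for \eqref{Main-1} start from \eqref{Main-c-1}: the right-hand side is $\frac{2k\left(1+\log\left(\frac{c-1}{2}\right)\right)}{\pi\left(1-\frac{\pi^2}{24}\right)}\cot\left(\frac{\pi}{2c}\right) < \frac{4kc\left(1+\log\left(\frac{c-1}{2}\right)\right)}{\pi^2\left(1-\frac{\pi^2}{24}\right)}$; then I must show $1+\log\left(\frac{c-1}{2}\right) \le \gamma\, c$ for a small constant $\gamma$ (one checks the function $\frac{1+\log((c-1)/2)}{c}$ is decreasing in $c\ge 3$, with maximal value attained at $c=3$, namely $\frac{1}{3}$; in fact for the sharp constant $0.3444$ one verifies numerically that $\frac{4\left(1+\log\left(\frac{c-1}{2}\right)\right)}{\pi^2\left(1-\frac{\pi^2}{24}\right)\,c} \le 0.3444$ for all $c\ge 3$, the worst case being $c=3$ where the value is $\frac{4}{3\pi^2\left(1-\frac{\pi^2}{24}\right)} = 0.2007\ldots$, comfortably below, so the bound holds with room to spare for larger $c$ as well). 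Fourth, for \eqref{Main-3} note from \eqref{Main-c-3} that $\sinh\left(\frac{4\pi\sqrt{\delta_{c,k,r}n}}{k}\right) \le \sinh\left(\pi\sqrt{n}\left(1-\frac{4}{c}\right)\right) \le \frac12 e^{\pi\sqrt{n}\left(1-\frac{4}{c}\right)}$, using $\sinh x \le \frac12 e^x$, valid for all $x\ge 0$.

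The main obstacle — such as it is — is bookkeeping rather than mathematics: one must make sure the numerical constants $0.3444$ and $0.4503$ are genuinely valid uniformly over all $c\ge 3$, i.e.\ that the relevant quotients $\frac{4\left(1+\log((c-1)/2)\right)}{\pi^2(1-\pi^2/24)c}$ and $\frac{\sqrt2}{\pi}$ really are maximised (for the former) at $c=3$ and do not creep up again; this requires a short monotonicity argument for $g(c)=\frac{1+\log((c-1)/2)}{c}$ on $[3,\infty)$, obtained by differentiating and checking $g'(c)<0$ there (equivalently $\frac{c}{c-1}<1+\log\frac{c-1}{2}$ fails for small $c$, so one instead bounds $g(c)\le g(3)=\tfrac13$ directly after confirming $g$ decreases past $c=3$, or simply notes $1+\log\frac{c-1}{2}\le \frac c3$ for $c\ge 3$ by checking $c=3$ and that the logarithm grows slower than the linear term). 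Once this inequality $1+\log\left(\frac{c-1}{2}\right)\le \frac{c}{3}$ is in hand, \eqref{Main-1} follows from $\frac{4}{3\pi^2(1-\pi^2/24)}=0.2007\ldots<0.3444$, and all three displays are established. No deep idea is needed beyond Ciolan's \eqref{Main-c-1}--\eqref{Main-c-3}; the lemma merely repackages those bounds into a form where the dependence on $c$ and $k$ is a clean monomial, which is what the later estimates of $\R(a,c,n)$ in Theorem \ref{Main-term-R} will need.
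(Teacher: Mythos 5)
Your overall route is the same as the paper's: quote Ciolan's bounds \eqref{Main-c-1}--\eqref{Main-c-3}, use $\cot x<1/x$ to get $\cot\left(\frac{\pi}{2c}\right)<\frac{2c}{\pi}$, bound the logarithmic factor by a linear function of $c$, and use $\sinh x\le\frac12 e^x$. The treatments of \eqref{Main-2} and \eqref{Main-3} are correct as written.

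However, your handling of the logarithmic factor in \eqref{Main-1} contains a false claim. The function $g(c)=\frac{1+\log\left(\frac{c-1}{2}\right)}{c}$ is \emph{not} decreasing on $[3,\infty)$ and is \emph{not} maximised at $c=3$: one has $g(3)=\frac13\approx 0.3333$ but $g(4)=\frac{1+\log(3/2)}{4}\approx 0.3514$ and $g(5)=\frac{1+\log 2}{5}\approx 0.3386$, so the inequality $1+\log\left(\frac{c-1}{2}\right)\le\frac{c}{3}$ on which you rest the proof of \eqref{Main-1} fails at $c=4$ and $c=5$ (e.g.\ $1+\log(3/2)\approx 1.4055>\tfrac43$). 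The actual maximum of $g$ over reals $c\ge 3$ occurs near $c\approx 3.85$ and equals about $0.3518$. The step is easily repaired: with the correct uniform bound $1+\log\left(\frac{c-1}{2}\right)\le 0.352\,c$ one gets
\begin{equation*}
\left|B_{j,c,k}(-n,0)\right|\le \frac{2c}{\pi}\cdot\frac{2k\cdot 0.352\,c}{\pi\left(1-\frac{\pi^2}{24}\right)}\le 0.2421\,kc^2<0.3444\,kc^2,
\end{equation*}
so the stated constant survives with room to spare (the paper itself uses the cruder but valid bound $\frac{1+\log\left(\frac{c-1}{2}\right)}{\pi\left(1-\frac{\pi^2}{24}\right)}<0.2704\,c$, obtained from $\log x\le a(x^{1/a}-1)$, which yields exactly $2\times 0.6367\times 0.2704=0.3443\ldots$). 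Also, your numerical evaluation $\frac{4}{3\pi^2\left(1-\frac{\pi^2}{24}\right)}=0.2007\ldots$ is off; the value is $\approx 0.2294$. Neither issue changes the truth of the lemma, but the monotonicity argument must be corrected before the proof of \eqref{Main-1} is complete.
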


\begin{proof}
We shall estimate \eqref{Main-1}. Using the fact in \cite[P. 159]{Bullen-1998},
\begin{align}\label{log}
\log x&\leq a(x^{1/a}-1),~~~~a,x>0,
\end{align}
%we have
%\begin{align*}
%1+\log\left(\frac {c-1}2\right)\leq \frac{c-1}{2}\leq\frac{c}{2}.
%\end{align*}
we have
\begin{align}\label{estim-1}
\frac {1+\log\left(\frac {c-1}2\right)}{\pi\left(1-\frac{\pi^2}{24}\right)}< 0.2704c.
\end{align}
Substituting \eqref{estim-1} and
\begin{align}\label{cot}
\cot\left(\frac {\pi}{2c}\right)\leq \frac {2c}{\pi}< 0.6367c
\end{align}
into \eqref{Main-c-1}, we get the first inequality \eqref{Main-1}.
By means of the estimates in \eqref{Main-c-2} and \eqref{cot}, we arrive at the second inequality \eqref{Main-2}.
The third inequality \eqref{Main-3} immediately follows from the definition of hyperbolic sine function. This completes the proof.
\end{proof}

We can give a bound for the main term of $\R(a,c,n)$.

\begin{thm}\label{Main-term-R}
A bound for the main term of $\R(a,c,n)$ is
\[0.1624 e^{\frac {\pi \sqrt{n}}c}n^{\frac 14}c+(0.0266c+0.2123)e^{\pi \sqrt{n}\left(1-\frac 4c\right)}n^{\frac 14}c.\]
\end{thm}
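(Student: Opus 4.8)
The plan is to bound the main term of $\R(a,c,n)$ directly from its definition in terms of the asymptotic formula of Ciolan. Recall from \eqref{N-id} and \eqref{N-R} that
\[
\R(a,c,n)=\frac1c\sum_{1\leq j\leq c-1}\zeta_c^{-aj}A\left(\frac jc;n\right),
\]
so $|\R(a,c,n)|\leq\frac1c\sum_{j=1}^{c-1}\left|A\left(\frac jc;n\right)\right|$, and by Theorem \ref{AA-thm} each $A\!\left(\frac jc;n\right)$ splits into a ``$B$-sum'' over $1\leq k\leq\sqrt n$ with $c\mid k$, $2\nmid k$, a ``$D$-sum'' over the same range with the extra conditions $c_1\neq4$, $r\geq0$, $\delta_{c,k,r}>0$, plus the error term $O(n^\varepsilon)$ which is deferred to Theorem \ref{Error-term-R}. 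So the task reduces to estimating these two finite sums.

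First I would treat the $B$-sum. Using the prefactor $i\sqrt{2/n}$, the bound $|B_{j,c,k}(-n,0)|\leq0.3444\,kc^2$ from \eqref{Main-1}, and $\sinh(\pi\sqrt n/k)\leq\frac12 e^{\pi\sqrt n/k}\leq\frac12 e^{\pi\sqrt n}$ (here the maximal contribution is at $k$ equal to the smallest admissible value, but even the crude bound $e^{\pi\sqrt n/k}\le e^{\pi\sqrt n/c}$ coming from $k\ge c$ is what is needed for the stated shape $e^{\pi\sqrt n/c}$), together with $\sum_{k\leq\sqrt n}k^{-1/2}\leq 2n^{1/4}$, one obtains a bound of the form $\text{(const)}\cdot e^{\pi\sqrt n/c}n^{1/4}c$. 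Tracking the numerical constants $\sqrt2\cdot 0.3444\cdot\frac12\cdot 2=0.3444\sqrt2\approx 0.487$, divided by the factor $1/c$ from $\R$ but multiplied by the $c-1<c$ terms in the $j$-sum, should land at the coefficient $0.1624$ in the statement (the precise bookkeeping of which $k$'s actually occur — only multiples of $c$ — gives a further saving that sharpens the constant). Second, the $D$-sum is handled the same way: prefactor $2\sqrt{2/n}$, the bound $|D_{j,c,k}(-n,m_{j,c,k,r})|\leq0.4503\,kc$ from \eqref{Main-2}, the bound $\sinh(4\pi\sqrt{\delta_{c,k,r}n}/k)\leq\frac12 e^{\pi\sqrt n(1-4/c)}$ from \eqref{Main-3}, and $\sum k^{-1/2}\le 2n^{1/4}$; but now one must also count how many pairs $(k,r)$ contribute. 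For each admissible $k$ the number of $r\geq0$ with $\delta_{c,k,r}>0$ is $O(1)$ more precisely bounded using the explicit formula for $\delta_{c,k,r}$, and summing over $k$ introduces the linear-in-$c$ and additive-constant split, producing the coefficient $(0.0266c+0.2123)$ in front of $e^{\pi\sqrt n(1-4/c)}n^{1/4}c$.

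The main obstacle I anticipate is the combinatorial bookkeeping in the $D$-sum: one needs a clean uniform-in-$c$ count of the number of valid indices $r$ for each $k$ (equivalently, a bound on how many shifted parameters $m_{j,c,k,r}$ survive the condition $\delta_{c,k,r}>0$), since this is exactly what controls whether the final coefficient is $O(c)$ or something worse. From the piecewise definition of $\delta_{c,k,r}$, in the regime $0<l/c_1\le 1/4$ one has $\delta_{c,k,r}=\frac1{16}-\frac{l}{2c_1}+\frac{l^2}{c_1^2}-r\frac{l}{c_1}$, which is positive only for $r$ up to roughly $\frac{c_1}{16 l}$, and a symmetric estimate holds in the regime $3/4<l/c_1<1$; bounding $c_1\le c$ and $l\ge1$ gives at most $O(c)$ values of $r$ per $k$, but getting the constant down to $0.0266$ requires being careful that $l$ is often much larger than $1$ and that $k$ ranges only over multiples of $c$ up to $\sqrt n$. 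Once that count is in hand, assembling the two pieces and simplifying constants is routine, and the stated bound
\[
0.1624\, e^{\frac{\pi\sqrt n}{c}}n^{1/4}c+(0.0266c+0.2123)e^{\pi\sqrt n\left(1-\frac4c\right)}n^{1/4}c
\]
follows by adding the $B$-sum and $D$-sum contributions.
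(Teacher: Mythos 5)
Your overall strategy is the paper's: split the main term into the $B$-piece and the $D$-piece, apply the Kloosterman bounds \eqref{Main-1}--\eqref{Main-3}, and sum over $k$ (and over $r$ for the $D$-piece). But there is a concrete error in the $k$-summation that breaks both the constant and the power of $n$. After dividing the bound $|B_{j,c,k}(-n,0)|\le 0.3444\,kc^2$ by $\sqrt{k}$, the quantity you must sum is $k^{+1/2}$, not $k^{-1/2}$; the estimate $\sum_k k^{-1/2}\le 2n^{1/4}$ that you invoke is the one used later for the \emph{error} terms, where the summand genuinely carries $k^{-1/2}$. With your version, the prefactor $\sqrt{2/n}$ times $2n^{1/4}$ gives $n^{-1/4}$, which does not match the stated $n^{1/4}$, and your constant comes out near $0.487$ rather than $0.1624$ — a discrepancy you notice but attribute to an unspecified ``further saving.'' The actual fix is the bound the paper uses,
\begin{equation*}
\sum_{\substack{1\le k\le\sqrt n\\ c\mid k}}k^{1/2}\le \frac{2}{3c}\,n^{3/4}
\qquad\text{and}\qquad
\sum_{\substack{1\le k\le\sqrt n\\ c\nmid k}}k^{1/2}\le \frac{2}{3}\,n^{3/4},
\end{equation*}
which simultaneously restores the power $n^{-1/2}\cdot n^{3/4}=n^{1/4}$ and yields exactly $\sqrt2\cdot 0.3444\cdot\tfrac12\cdot\tfrac{2}{3}=0.1624$ for the $B$-piece (the $1/c$ saved by summing only over multiples of $c$ cancels one factor of $c$ from $c^2$, and the $\frac1c\sum_{j=1}^{c-1}$ averaging contributes a factor $\le 1$).

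For the $D$-piece your outline of the counting problem is on the right track, but you never land on the bound that is actually needed: for fixed $k$ the number of admissible $r$ with $\delta_{c,k,r}>0$ is at most $\frac{c+8}{16}$ (the paper's \eqref{1-bounds}), and it is precisely $0.4503\cdot\frac{2\sqrt2}{3}\cdot\frac{c+8}{16}$ that produces the coefficient $0.0266c+0.2123$. Your heuristic ``$r\lesssim c_1/(16l)$, hence $O(c)$ values'' points in the right direction but leaves the constant unproved, so as written the $D$-piece of your argument does not close either.
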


\begin{proof}
% Here we need to invoke the generating function of $\N(m,n)$.
Immediately, the main term of \eqref{AA} gives the main term of $\R(a,c,n)$
\begin{align}\label{N}
&\frac 1c \sum_{1\leq j\leq c-1}\zeta_c^{-aj}  i\sqrt{\frac 2n}\sum_{\substack{1\leq k\leq \sqrt{n}\\[2pt] c|k, 2\nmid k}}\frac {B_{j,c,k}(-n,0)}{\sqrt{k}} \cdot\sinh\left(\frac {\pi \sqrt{n}}k\right)
\nonumber\\[6pt]
&\quad+\frac 1c \sum_{1\leq j\leq c-1}\zeta_c^{-aj} 2\sqrt{\frac 2n}\sum_{\substack{1\leq k\leq \sqrt{n}\\[2pt]
c|k, 2\nmid k, c_1\neq 4\\[2pt] r\geq0, \delta_{c,k,r}>0}}\frac {D_{j,c,k}(-n,m_{j,c,k,r})}{\sqrt{k}} \cdot\sinh\left(\frac {4\pi \sqrt{\delta_{c,k,r}n}}k\right).
\end{align}
Let $G_1$ and $G_2$ denote the summations of \eqref{N}, respectively.
%Note that $A(\frac 0c;n)=\p(n)$.
By the bounds in \eqref{Main-1}, we have
\begin{align}\label{main1}
&\left|G_1\right|\leq \sqrt{\frac 2n}\sinh\left(\frac {\pi \sqrt{n}}c\right)
\sum_{\substack{1\leq k\leq \sqrt{n}\\[2pt]c|k}}0.3444k^{\frac 12}c^2
\leq 0.1624 e^{\frac {\pi \sqrt{n}}c}n^{\frac 14}c,
\end{align}
where the last inequality is due to
\begin{equation*}
\sum_{\substack{1\leq k\leq \sqrt{n}\\[2pt]c|k}}k^{\frac 12}
\leq c^{\frac{1}{2}}\sum_{1\leq m\leq \lfloor\frac{N}{c}\rfloor}m^{\frac{1}{2}}
\leq c^{\frac{1}{2}}\int_1^{\lfloor\frac{N}{c}\rfloor}
x^{\frac{1}{2}}dx
\leq\frac 2{3c}n^{\frac 34}.
\end{equation*}

Now we consider the bound of $|G_2|$.
For fixed $k$, it is easy to see that
\begin{equation}\label{1-bounds}
  \sum_{\substack{r\geq0 \\[2pt]c|k, 2\nmid k, c_1\neq 4\\[2pt] \delta_{c,k,r}>0}} 1\leq \frac {c+8}{16}.
\end{equation}
Then by \eqref{Main-2}, \eqref{Main-3} and \eqref{1-bounds}, we get
\begin{align*}
\left|G_2\right|
&\leq 0.4503c\sqrt{\frac 2n}e^{\pi \sqrt{n}\left(1-\frac 4c\right)}\frac {c+8}{16}\sum_{\substack{1\leq k\leq \sqrt{n}\\[2pt] c\nmid k}}k^{\frac 12}.
\end{align*}
Since
\begin{equation*}
 \sum_{\substack{1\leq k\leq \sqrt{n}\\[2pt] c\nmid k}}k^{\frac 12}\leq\int_1^{\sqrt{n}} x^{\frac 12}dx\leq \frac 23 n^{\frac 34},
\end{equation*}
we have
\begin{align}\label{main3}
\left|G_2\right|
&\leq 0.4503c\sqrt{\frac 2n}e^{\pi \sqrt{n}\left(1-\frac 4c\right)}\cdot \frac {c+8}{16} \cdot \frac 23 n^{\frac 34}
\nonumber\\[6pt]
&\leq (0.0266c+0.2123)e^{\pi \sqrt{n}\left(1-\frac 4c\right)}n^{\frac 14}c.
\end{align}
Plugging \eqref{main1} and \eqref{main3} into \eqref{N}, we complete the proof.
\end{proof}

Next we shall give a bound for the error terms of $\R(a,c,n)$.
To this end, we need to analyse the terms in \eqref{error-c}, \eqref{error-c2} and \eqref{error-c3} more detailedly.

\begin{lem}\label{error}
For integer $c >2$, then we have
\begin{align*}
S_1 &\leq 1496.9n^{\frac 14}c,
\quad S_2 \leq 3111.36n^{\frac 14}c,
\quad~~ S_3 \leq 1363.79\overline{C}_4n^{\frac 14}c,
\\[3pt]
S_4 &\leq 82469.8n^{\frac 14}c,
\quad S_5 \leq 964.35\overline{C}_2 n^{\frac 14}c,
\quad S_6 \leq 482.18\overline{C}_2n^{\frac 14}c,
\\[3pt]
S_7 &\leq0.9093n^{\frac 78}c,
\quad~~ S_8  \leq0.9093n^{\frac 78}c
\end{align*}
and
\begin{align*}
S_{2err}&\leq 386.18n^{-\frac 14}c^2,
\quad S_{5err}\leq 772.36n^{-\frac 14}c^2,
\quad S_{6err}\leq 386.18n^{-\frac 14}c^2,
\\[3pt]
I_{2err}&\leq 1433.39n^{\frac 14}c^2,
\quad I_{5err}\leq 2866.78n^{\frac 14}c^2,
\quad~~~ I_{6err}\leq 1433.39n^{\frac 14}c^2,
\end{align*}
where
\begin{align*}
\overline{C}_2=2\exp\left(\frac {32c^2(16c^2+(c^2-8)\pi)} {\pi^2(c^2-8)^2 }\right),
\quad
\overline{C}_4=\exp\left(\frac {4c^2(2c^2+\pi)}{\pi^2}\right).
\end{align*}
\end{lem}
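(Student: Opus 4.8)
The plan is to bound each of the error terms $S_1,\dots,S_8$ and the auxiliary pieces $S_{2err},S_{5err},S_{6err},I_{2err},I_{5err},I_{6err}$ by plugging the elementary estimates \eqref{estim-1} and \eqref{cot} into Ciolan's bounds \eqref{error-c}, \eqref{error-c2}, \eqref{error-c3}, together with a uniform control on the constants $C_1,\dots,C_5$ from \eqref{C2}--\eqref{C4} and on the Kloosterman-type sum $\sum_k k^{-1/2}$. For the latter, I would note that the sum runs over $k\le\sqrt n$ subject to the various divisibility/positivity conditions, so $\sum_k k^{-1/2}\le\sum_{k\le\sqrt n}k^{-1/2}\le 1+\int_1^{\sqrt n}x^{-1/2}\,dx\le 2n^{1/4}$; this converts each $e^{2\pi}$-type prefactor in \eqref{error-c} into an explicit numerical constant times $n^{1/4}c$ (the $c$ coming from $\cot(\pi/2c)<0.6367c$). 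For $S_7,S_8$ the bound is already explicit in \eqref{error-c}; one only needs $\log(n/4)\le\log n$, the inequality $\frac{1}{\pi(1-\pi^2/24)}<0.2704c/(1+\log\frac{c-1}{2})\le$ a constant, and $1/\sin(\pi/c)\le c$ (valid since $\sin x\ge (2/\pi)x$ on $[0,\pi/2]$ but here we want the reverse — actually $\sin(\pi/c)\ge (2/\pi)(\pi/c)=2/c$ for $c\ge 2$, giving $1/\sin(\pi/c)\le c/2$), so that $S_7,S_8\le 0.9093\, n^{3/4}\log n\cdot c$, and since $\log n\le n^{1/8}$ we may absorb it to obtain $S_7,S_8\le 0.9093\, n^{7/8}c$ as claimed.

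The more delicate point is bounding the constants $C_2$ and $C_4$, which depend on $c$ and involve the series $\sum_{r\ge1}\p(r)e^{-\alpha r}$ with $\alpha$ small when $c$ is large (namely $\alpha=\frac{(c^2-8)\pi}{16c^2}$ for $C_2$ and $\alpha=\frac{\pi}{2c^2}$ for $C_4$). For these I would use the crude but sufficient bound $\p(r)\le e^{\pi\sqrt{2r}}$ (or any standard majorant, e.g.\ $\p(r)<e^{C\sqrt r}$ with an explicit $C$), so that $\sum_{r\ge1}\p(r)e^{-\alpha r}\le\sum_{r\ge1}e^{C\sqrt r-\alpha r}$, and then estimate this sum by comparison with an integral or by completing the square in $\sqrt r$: $C\sqrt r-\alpha r=-\alpha(\sqrt r-\tfrac{C}{2\alpha})^2+\tfrac{C^2}{4\alpha}$, whence the sum is $\le (\text{const}/\sqrt\alpha+\text{const})\,e^{C^2/(4\alpha)}$. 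Taking $\alpha=\frac{\pi}{2c^2}$ gives $e^{C^2/(4\alpha)}=e^{C^2c^2/(2\pi)}$, and with the right numerical $C$ this matches the exponent $\frac{4c^2(2c^2+\pi)}{\pi^2}$ in $\overline C_4$; similarly for $\overline C_2$ with $\alpha=\frac{(c^2-8)\pi}{16c^2}\ge\frac{\pi}{16c^2}$ (using $c\ge3$ so $c^2-8\ge1$), which produces the exponent $\frac{32c^2(16c^2+(c^2-8)\pi)}{\pi^2(c^2-8)^2}$. I expect this step — getting the exponential constants $\overline C_2,\overline C_4$ in exactly the stated closed form rather than merely up to constants — to be the main obstacle, since it requires tracking the $c$-dependence through both $\alpha$ and the completed square carefully, and one must be sure the polynomial-in-$c$ prefactors really are dominated by the constants $964.35$, $1363.79$ etc.

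Finally, $C_1,C_3,C_5$ in \eqref{C2}--\eqref{C4} are $c$-independent absolute constants (the exponents $-\frac{(16r-1)\pi}{16}$, $-\pi r$, $-\frac{\pi(2r+1)}{8}$ do not involve $c$), so each is a fixed convergent numerical series that I would bound by its first few terms plus a geometric tail, yielding the plain numerical constants $1496.9$, $3111.36$, $82469.8$, $964.35\overline C_2$, $482.18\overline C_2$ in the statement. Assembling: into $S_1<4C_3e^{2\pi}\cot(\pi/2c)\sum_k k^{-1/2}$ substitute $C_3\le(\text{const})$, $e^{2\pi}$, $\cot(\pi/2c)<0.6367c$, and $\sum_k k^{-1/2}\le 2n^{1/4}$ to get $S_1\le 1496.9\,n^{1/4}c$, and likewise for $S_2,\dots,S_6$; for $S_{2err},S_{5err},S_{6err}$ use the extra $n^{-1/2}$ and $\frac{1+\log((c-1)/2)}{\pi(1-\pi^2/24)}<0.2704c$ from \eqref{estim-1} to pick up the second power of $c$, giving the $n^{-1/4}c^2$ bounds; and for $I_{2err},I_{5err},I_{6err}$ the same with $n^{1/4}$ in place of $n^{-1/2}$ yielding $n^{1/4}c^2$. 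Throughout, the only arithmetic is rounding the products of the numerical prefactors $e^{2\pi}$, $e^{2\pi+\pi/8}$, $\frac43+2^{5/4}$, $0.6367$, $0.2704$, $2$, $4C_3$, etc.\ upward to the displayed decimals, which is routine. This completes the proof of Lemma~\ref{error}.
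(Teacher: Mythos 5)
Your overall architecture matches the paper's proof: bound $\sum_k k^{-1/2}\le 2n^{1/4}$, insert $\cot(\pi/(2c))<0.6367c$ and $\frac{1+\log((c-1)/2)}{\pi(1-\pi^2/24)}<0.2704c$ into \eqref{error-c}--\eqref{error-c3}, and control $C_1,\dots,C_5$ separately. Two sub-steps need attention. First, your treatment of $S_7,S_8$ rests on the inequality $\log n\le n^{1/8}$, which is false throughout the range that matters (it fails for all $3\le n\lesssim 2.6\times 10^{11}$), so the step ``absorb $\log n$ into $n^{1/8}$ at no cost'' does not go through. The paper instead uses $\log x\le a(x^{1/a}-1)$ with $a=8$, $x=n/4$ (see \eqref{log} and \eqref{est-log}), i.e.\ $\log(n/4)\le 8(n/4)^{1/8}\approx 6.727\,n^{1/8}$, and this factor of $\approx 6.727$ is exactly what is baked into the constant: $\tfrac{6.727}{4\pi(1-\pi^2/24)}\approx 0.9093$. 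Your intermediate claim $S_7\le 0.9093\,n^{3/4}\log n\cdot c$ is therefore also misattributed — the raw coefficient before handling the logarithm is about $0.135$, not $0.9093$.

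Second, for $C_2$ and $C_4$ you propose $\p(r)\le e^{C\sqrt r}$ plus completing the square in $\sqrt r$; the paper takes a different route, bounding the full generating function via $\log F(|q|)<2|q|/(1-|q|)^2$ from the product formula \eqref{overp-gener} and then applying $e^{-x}/(1-e^{-x})^2<(1+x)/x^2$ (see \eqref{overp-b} and \eqref{e}), which lands \emph{exactly} on the closed forms $\overline C_2,\overline C_4$ in the statement. Your Gaussian-sum route is workable but will not ``match the exponent'' as you hope: with $\alpha=\pi/(2c^2)$ and $C=\pi$ it yields roughly $e^{\pi c^2/2}$ times a prefactor of order $1/\sqrt\alpha\sim c$, i.e.\ an exponent quadratic in $c$ rather than the quartic $8c^4/\pi^2+\dots$ appearing in $\overline C_4$. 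Since your bound is in fact \emph{smaller} than $\overline C_4$ for $c\ge 3$, it would still suffice to establish the lemma as stated, but only after an explicit comparison showing $(\mathrm{const}\cdot c)\,e^{\pi c^2/2}\le \overline C_4$ (and the analogue for $\overline C_2$); this extra verification is not routine bookkeeping and should be carried out, or else replaced by the paper's generating-function argument. The remaining items ($C_1,C_3,C_5$ as absolute constants, and the $S_{err}$, $I_{err}$ bounds) are handled as in the paper.
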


\begin{proof}
First, we give the estimations of $C_1,C_2, C_3,C_4, C_5$ in \eqref{C2} and \eqref{C4}.
By the fact
\begin{align}\label{over-est}
\p(n)< e^{\pi\sqrt{n}},
\end{align}
it is easy to check that
\begin{align}\label{error-c-1}
C_1\leq 0.8066,
\quad
C_3\leq 0.5488,
\quad
C_5\leq 120.942.
\end{align}

To estimate $C_2$, we claim that
\begin{align}\label{overp-b}
  \sum_{n\geq 0}\p(n)e^{-2\pi ny}\leq \exp\left(\frac {2e^{-2\pi y}}{(1-e^{-2\pi y})^2}\right),
\end{align}
where $y$ is a positive real number.
Recall that the generating function of $\p(n)$ is given by Corteel and Lovejoy \cite{Corteel-Lovejoy-2004}
\begin{equation}\label{overp-gener}
  F(q):=\sum_{n\geq0}\p(n)q^n=\prod_{n\geq1}\frac{(1+q^n)}{(1-q^n)},
\end{equation}
where $q=e^{2\pi iz}$ with $z=x+iy$.
Taking logarithm of both sides of \eqref{overp-gener}, we have
\begin{align*}
  \log F(q)
  &=\sum_{n\geq1} \left(\log (1+q^n)-\log (1-q^n)\right)
  \\[5pt]
  &=\sum_{n\geq1} \sum_{m\geq1} \left(\frac {q^{nm}}m-\frac {(-1)^m q^{nm}}m\right)\\[5pt]
  %&=\sum_{n\geq1} \sum_{m\geq1} \frac {2q^{n(2m-1)}}{2m-1}\\
  %&<2\sum_{n\geq1} \sum_{m\geq1} \frac {q^{nm}}m\\
  &<\frac {2q}{(1-q)^2}.
\end{align*}
Hence
\begin{equation*}
  F(|q|)\leq \exp \left(\frac {2|q|}{(1-|q|)^2}\right),
\end{equation*}
together with $|q|=e^{-2\pi y}$, which gives \eqref{overp-b} as claimed.
Since integer $c>2$, $c^2-8>0$.
Take $y=(c^2-8)/32c^2$ in \eqref{overp-b} and we have
\begin{align*}
   C_2&\leq 2\exp\left(\frac {2e^{-\frac {(c^2-8)\pi }{16c^2}}}{\left(1-e^{\frac {(c^2-8)\pi }{16c^2}}\right)^2}\right).
\end{align*}
For $x>-1$, it can be easily derived from
\begin{equation*}
  \frac x{1+x}<1-e^{-x}<x
\end{equation*}
that
\begin{equation}\label{e}
  \frac {e^{-x}}{(1-e^{-x})^2}< \frac {1+x}{x^2}.
\end{equation}
Therefore
\begin{align}\label{C2-b}
   C_2\leq 2\exp\left(\frac {32c^2(16c^2+(c^2-8)\pi)} {\pi^2(c^2-8)^2 }\right):=\overline{C}_2.
% C_4&\leq \exp\left(\frac {4c^2(2c^2+\pi)}{\pi^2}\right).
\end{align}

By the same argument as above, one can derive
\begin{equation}\label{C4-b}
  C_4\leq \exp\left(\frac {4c^2(2c^2+\pi)}{\pi^2}\right):=\overline{C}_4
\end{equation}
from \eqref{overp-b} and \eqref{e}.

Since
\begin{align}\label{sum-k}
\sum_k k^{-\frac 12}=\int_1^{\sqrt{n}} x^{-\frac 12}dx\leq2n^{\frac 14}.
\end{align}
Substituting \eqref{cot}, \eqref{sum-k} and the bounds for $C_1, \ldots, C_5$ as given in \eqref{error-c-1}, \eqref{C2-b} and \eqref{C4-b} into \eqref{error-c}, we finally derive estimates for $S_1,\ldots,S_6$ in Lemma \ref{error}.

As for $S_7$ and $S_8$, first we note that
when $x\in\left(0,\frac{\pi}{2}\right)$,
\[\sin x\geq \frac {2x}{\pi}.\]
Then we have
\begin{align}\label{est-sin}
\sin\left(\frac {\pi}c\right)\geq \frac 2c .
\end{align}
Meanwhile, setting $x=\frac{n}{4}$ and $a=8$ in \eqref{log}, we have
\begin{equation}\label{est-log}
  \log\left(\frac n4\right)\leq 8\left(\frac n4\right)^{\frac 18}.
\end{equation}
Thus  $S_7$ and $S_8$ is bounded by $0.9093n^{\frac 78}c$ as we
substitute \eqref{est-sin} and \eqref{est-log} into \eqref{error-c}.
The asymptotic estimates $S_{2err}$, $S_{5err}$, $S_{6err}$, $I_{2err}$, $I_{5err}$ and $I_{6err}$ easily follow from  \eqref{estim-1}, \eqref{cot} and \eqref{sum-k}. This completes the proof.
\end{proof}
 %In addiction, using the fact that $\p(n)\sim \frac 1{8n}e^{\pi\sqrt{n}}$, we compute that $C_1\leq 0.1996, C_3\leq 0.1358$, and $C_5\leq 4.0326$. In order to calculate $C_2$ and
%$C_4$, we only need to show that
%\begin{align}\label{overp-b}
 % \sum_{n\geq 0}\p(n)e^{-2\pi ny}\leq \exp\left(\frac {2e^{-2\pi y}}{(1-e^{-2\pi y})^2}\right),
%\end{align}
%where $y$ is a positive real number.
%Based on the above calculation, we simplify the bounds what we need:
%\begin{align*}
 % {\bigcdot}~S_2 &\leq 769.94n^{\frac 14}c  &  \bigcdot~S_{2err}&\leq 271.64n^{-\frac 14}c^2 \\
 % \bigcdot~S_5 &\leq 1928.69 \exp\left(\frac {32c^2(16c^2+(c^2-8)\pi)} {\pi^2(c^2-8)^2 }\right)n^{\frac 14}c  &  \bigcdot~S_{5err}&\leq 543.28n^{-\frac 14}c^2 \\
 % \bigcdot~S_6 &\leq 964.35 \exp\left(\frac {32c^2(16c^2+(c^2-8)\pi)} {\pi^2(c^2-8)^2 }\right)n^{\frac 14}c  &  \bigcdot~S_{6err}&\leq 271.64n^{-\frac 14}c^2 \\
 % \bigcdot~S_1 &\leq 370.41n^{\frac 14}c  &  \bigcdot~I_{2err}&\leq 1108.25n^{\frac 14}c^2 \\
 % \bigcdot~S_3 &\leq 1363.79\exp\left(\frac {4c^2(2c^2+\pi)}{\pi^2}\right)n^{\frac 14}c  &  \bigcdot~I_{2err}&\leq 1008.25n^{\frac 14}c^2 \\
 % \bigcdot~S_4 &\leq 2749.81n^{\frac 14}c  &  \bigcdot~I_{5err}&\leq 2016.5n^{\frac 14}c^2 \\
  %\bigcdot~S_7 &\leq 0.9093n^{\frac 78}c   \\
 % \bigcdot~S_8 &\leq 0.9093n^{\frac 78}c
%\end{align*}

A bound for the error terms of $\R(a,c,n)$ immediately follows from the above lemma.

\begin{thm}\label{Error-term-R}
The bound for the error term of $\R(a,c,n)$ is
\begin{align}\label{err}
  1544.72n^{-\frac 14}c^2+87078.1n^{\frac 14}c&+5733.56n^{\frac 14}c^2
  +1.8186n^{\frac 78}c
  \nonumber\\[6pt]
  &\quad+1363.79\overline{C}_4n^{\frac 14}c
  +1446.53 \overline{C}_2n^{\frac 14}c.
\end{align}
\end{thm}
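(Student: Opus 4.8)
The plan is to combine the decomposition \eqref{N-id}--\eqref{N-R} of $\N(a,c,n)$ with the triangle inequality and the piecewise estimates of Lemma \ref{error}. First I would recall that, by \eqref{N-R}, $\R(a,c,n)=\frac1c\sum_{1\le j\le c-1}\zeta_c^{-aj}A\!\left(\frac jc;n\right)$, and that by Theorem \ref{AA-thm} each coefficient $A\!\left(\frac jc;n\right)$ is the sum of a main term (already treated in Theorem \ref{Main-term-R}) and an error term $O(n^{\varepsilon})$. Following Ciolan, this $O(n^{\varepsilon})$ is itself the sum of the pieces $S_1,\dots,S_8$ together with the additional contributions $S_{2err},S_{5err},S_{6err}$ and $I_{2err},I_{5err},I_{6err}$. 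Isolating the error part of $\R(a,c,n)$ and applying the triangle inequality, its modulus is at most $\frac{c-1}{c}$ times the maximum over $1\le j\le c-1$ of the error term attached to a single $A\!\left(\frac jc;n\right)$; since $\frac{c-1}{c}<1$, the factor coming from the sum over residues simply disappears, and it suffices to bound the error term of one $A\!\left(\frac jc;n\right)$ uniformly in $j$.

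Next I would feed in the fourteen bounds of Lemma \ref{error}, which are already stated uniformly in the residue, and collect like powers of $n$ and $c$. The $n^{-1/4}c^2$ terms $S_{2err}+S_{5err}+S_{6err}$ sum to $1544.72\,n^{-1/4}c^2$; the plain $n^{1/4}c$ terms $S_1+S_2+S_4$ sum to $87078.1\,n^{1/4}c$; the $n^{1/4}c^2$ terms $I_{2err}+I_{5err}+I_{6err}$ sum to $5733.56\,n^{1/4}c^2$; the $n^{7/8}c$ terms $S_7+S_8$ sum to $1.8186\,n^{7/8}c$; the single term $S_3$ gives $1363.79\,\overline{C}_4\,n^{1/4}c$; and $S_5+S_6$ give $1446.53\,\overline{C}_2\,n^{1/4}c$. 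Adding these six groups reproduces exactly the expression \eqref{err}.

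The argument is essentially bookkeeping on top of Lemma \ref{error}, so I do not expect a genuine conceptual obstacle; the point that needs care is verifying that the estimates of Lemma \ref{error} are valid for every residue $1\le j\le c-1$, not merely for a single fixed $a$, so that they survive the passage through $\frac1c\sum_j$ and the factor $\frac{c-1}{c}<1$ may legitimately be discarded. A secondary sanity check is to confirm that the list $S_1,\dots,S_8,S_{2err},\dots,I_{6err}$ exhausts every contribution to the $O(n^{\varepsilon})$ of Theorem \ref{AA-thm}, so that \eqref{err} dominates the full error term and not just part of it.
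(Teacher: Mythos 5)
Your proposal is correct and matches the paper's argument, which likewise just adds the estimates of Lemma \ref{error} (your groupings $S_1+S_2+S_4=87078.1$, $S_{2err}+S_{5err}+S_{6err}=1544.72$, $I_{2err}+I_{5err}+I_{6err}=5733.56$, $S_7+S_8=1.8186$, $S_5+S_6=1446.53\,\overline{C}_2$, plus $S_3$ all check out) and absorbs the $\tfrac{c-1}{c}<1$ factor from the residue sum, exactly as the paper does implicitly. Your explicit bookkeeping and the uniformity-in-$j$ remark only make the paper's one-line proof more transparent.
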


\begin{proof}
%By the orthogonality of the roots of unity
%\begin{equation*}
%\sum_{j~(\mathrm{mod}~c)}\zeta_{c}^{aj}=\begin{cases}
%0,&\mbox{if $a\not\equiv0\pmod{c}$,}\\
%c,&\mbox{if $a\equiv0\pmod{c}$,}
%\end{cases}
%\end{equation*}
%we have
%\begin{align}\label{zeta-1}
%\frac{1}{c}\sum_{1\leq j\leq c-1}\zeta_{c}^{-aj}=\frac{c-1}{c}<1.
%\end{align}
Adding the estimates in Lemma \ref{error} together and applying to \eqref{AA}, we get the bound for the error term of $\R(a,c,n)$ as desired.
\end{proof}

The following bounds for $\p(n)$ serves as a connection between $\p(n)$ and $\R(a,c,n)$.

\begin{thm}\label{p-bound-thm}
We have
\begin{equation}\label{p-bound}
  \frac{1}{8n}\left(1-\frac{1}{\sqrt{n}}\right)e^{\pi\sqrt{n}}\leq \p(n)\leq \frac{1}{8n}\left(1+\frac{1}{\sqrt{n}}\right)e^{\pi\sqrt{n}}.
\end{equation}
\end{thm}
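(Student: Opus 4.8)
The plan is to read off the two–sided estimate \eqref{p-bound} from the Rademacher-type exact formula for $\p(n)$ recorded by Engel \cite{Engel-2017}, i.e.\ the analogue for $\p(n)$ (the specialization $u=1$ of $\mathcal{O}(u;q)$) of the formula of Ciolan in Theorem \ref{AA-thm}. That formula writes $\p(n)$ as a convergent series over odd moduli $k\ge1$; isolating the $k=1$ term $M(n)$ we write $\p(n)=M(n)+E(n)$, where $E(n)$ collects the terms with $k\ge3$. The $k=1$ Kloosterman-type sum is trivial, so $M(n)$ is the elementary quantity
\[
M(n)=\frac{1}{2\pi}\,\frac{d}{dn}\!\left(\frac{\sinh(\pi\sqrt n)}{\sqrt n}\right)=\frac{(\pi\sqrt n-1)e^{\pi\sqrt n}+(\pi\sqrt n+1)e^{-\pi\sqrt n}}{8\pi n^{3/2}},
\]
the second equality being the differentiation together with $\sinh(\pi\sqrt n)=\tfrac12(e^{\pi\sqrt n}-e^{-\pi\sqrt n})$. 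Since the $e^{-\pi\sqrt n}$ summand is positive and $\tfrac1\pi<1$, this gives
\[
\frac{e^{\pi\sqrt n}}{8n}\left(1-\frac{1}{\pi\sqrt n}\right)<M(n)<\frac{e^{\pi\sqrt n}}{8n}\left(1-\frac{1}{\pi\sqrt n}\right)+\frac{(\pi\sqrt n+1)e^{-\pi\sqrt n}}{8\pi n^{3/2}},
\]
so $M(n)$ already sits strictly between the two bounds in \eqref{p-bound}, with slack of order $\frac{e^{\pi\sqrt n}}{8n^{3/2}}$ on each side (concretely $\frac{\pi-1}{\pi}\cdot\frac{e^{\pi\sqrt n}}{8n^{3/2}}$ below and $\frac{\pi+1}{\pi}\cdot\frac{e^{\pi\sqrt n}}{8n^{3/2}}$ above, up to the tiny $e^{-\pi\sqrt n}$ correction).

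It remains to absorb $E(n)$ into this slack. Each term with $k\ge3$ odd carries a factor $\sinh(\pi\sqrt n/k)$, hence is $O(e^{\pi\sqrt n/3})$, while the associated $\omega_{h,k}$-Kloosterman sum is $O(k)$ in modulus; summing over $k$ therefore yields an explicit bound $|E(n)|\le C\,e^{\pi\sqrt n/3}$ with $C$ an absolute constant (a polynomial pre-factor here, should one prefer to keep it, would only enlarge the threshold below). Comparing with the slack reduces \eqref{p-bound} to $8\pi C\,n^{3/2}\le(\pi-1)e^{2\pi\sqrt n/3}$, which holds for all $n$ beyond an explicit and small threshold $n_0$. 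For the finitely many remaining $n$ with $1\le n<n_0$ one simply computes $\p(n)$ from the product \eqref{overp-gener} and checks \eqref{p-bound} numerically; note $n=1$ is immediate since the lower bound vanishes there.

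The one genuinely technical point is the middle step: extracting from Engel's analysis \cite{Engel-2017} an honest numerical value of $C$ in $|E(n)|\le C\,e^{\pi\sqrt n/3}$, which requires uniform control both of the Kloosterman sums and of the errors produced by truncating the Farey dissection and by deforming each arc onto a fixed contour, and then keeping $n_0$ small enough that the closing finite check is short. The computation of $M(n)$ and the final elementary comparison are routine.
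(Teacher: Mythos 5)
Your proposal follows essentially the same route as the paper: both isolate the $k=1$ term of Engel's formula for $\p(n)$, compute it explicitly (your $M(n)$ is exactly the bracketed main term in \eqref{overlinep-asym-1}), observe that it sits strictly inside the bounds \eqref{p-bound} with slack of order $e^{\pi\sqrt{n}}/n^{3/2}$, and then absorb a remainder of size roughly $e^{\pi\sqrt{n}/3}$. The only substantive difference is how that remainder is controlled. You work with the full (infinite) series and defer, as ``the one genuinely technical point,'' the extraction of an explicit constant $C$ with $|E(n)|\le C\,e^{\pi\sqrt{n}/3}$, to be followed by a finite numerical check up to an unspecified threshold $n_0$; as written, this is the one incomplete step of your argument, and bounding an infinite tail uniformly is genuinely more delicate than what is needed. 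The paper sidesteps it entirely: Engel's result \cite{Engel-2017} is quoted as a \emph{truncated} formula valid for every $N$, with the explicit error bound $|R_2(n,N)|\le \frac{N^{5/2}}{\pi n^{3/2}}\sinh\bigl(\frac{\pi\sqrt{n}}{N}\bigr)$, so taking $N=3$ immediately gives \eqref{overlinep-asym-1} together with $|R_2(n,3)|\le \frac{3^{5/2}}{2\pi n^{3/2}}e^{\pi\sqrt{n}/3}$ as in \eqref{R_2(n,3)}, which is dominated by $\frac{e^{\pi\sqrt{n}}}{16 n^{3/2}}$; since $\frac{1}{\pi}+\frac{1}{2}<1$, the coefficient comparison then yields \eqref{p-bound} in a few lines, with no circle-method constants to re-derive and no sizeable finite verification. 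So your plan is sound and matches the paper's in substance, but the constant you propose to ``extract from Engel's analysis'' is in fact part of the very statement you are citing; using the truncated form of his theorem, rather than the convergent series, is what makes the argument short and fully explicit.
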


\begin{proof}
Recall that Engel \cite{Engel-2017} provided an error term for the overpartition function
\begin{align*}
\p(n)=\frac{1}{2\pi}\sum_{1\leq k\leq N\atop 2\nmid k}\sqrt{k}\sum_{0\leq h\leq k \atop (h,k)=1}
\frac{\omega(h,k)^2}{\omega(2h,k)}e^{-\frac{2\pi inh}{k}}\frac{\mathrm{d}}{\mathrm{d}n}
\left(\frac{\sinh\left(\frac{\pi\sqrt{n}}{k}\right)}{\sqrt{n}}\right)+R_2(n,N),
\end{align*}
where
\begin{align*}
|R_2(n,N)|\leq \frac{N^{\frac{5}{2}}}{\pi n^{\frac{3}{2}}}
\sinh\left(\frac{\pi\sqrt{n}}{N}\right).
\end{align*}
In particular, when $N=3$, we have
\begin{align}\label{overlinep-asym-1}
\p(n)=\frac{1}{8n}\left[\left(1+\frac{1}{\pi\sqrt{n}}\right)e^{-\pi\sqrt{n}}+\left(1-\frac{1}{\pi\sqrt{n}}\right)e^{\pi\sqrt{n}}\right]
+R_2(n,3),
\end{align}
where
\begin{align}\label{R_2(n,3)}
|R_2(n,3)|\leq \frac{3^{\frac{5}{2}}}{\pi n^{\frac{3}{2}}}
\sinh\left(\frac{\pi\sqrt{n}}{3}\right)\leq \frac{3^{\frac{5}{2}}e^{\frac{\pi\sqrt{n}}{3}}}{2\pi n^{\frac{3}{2}}}
\leq\frac{\pi e^{\pi\sqrt{n}}}{16\pi n^{\frac{3}{2}}}.
\end{align}
By \eqref{overlinep-asym-1}, we get the following upper bound for $\p(n)$
\begin{align*}
\p(n)\leq\frac{1}{8n}\left(1+\frac{1}{\pi\sqrt{n}}\right)e^{\pi\sqrt{n}}+\frac{\pi e^{\pi\sqrt{n}}}{16\pi n^{\frac{3}{2}}}.
\end{align*}
The right hand side of above inequality can be rewritten as
\begin{align*}
\frac{1}{8n}\left(1+\frac{1}{\pi\sqrt{n}}\right)e^{\pi\sqrt{n}}+\frac{\pi e^{\pi\sqrt{n}}}{16\pi n^{\frac{3}{2}}}
=\frac{1}{8n}\left(1+\frac{2+\pi}{ 2\pi\sqrt{n}}\right)e^{\pi\sqrt{n}},
\end{align*}
which arrives at the right-hand side of \eqref{p-bound}.
The left hand-side of \eqref{p-bound} can be verified easily. This completes the proof.
\end{proof}

We are now ready to complete the proof of Theorem \ref{bound-N}.

\begin{proof}[Proof of Theorem \ref{bound-N}]
Combining Theorems \ref{Main-term-R} and \ref{Error-term-R}, we have
\begin{align}\label{R-bound}
  \left|\R(a,c,n)\right|
  \leq &\bigg| 0.1624 e^{\frac {\pi\sqrt{n}}c}n^{\frac 14}c+(0.0266c+0.2123)e^{\pi\sqrt{n}\left(1-\frac 4c\right)}n^{\frac 14}c
 \notag \\[6pt]
  &\quad+1544.72n^{-\frac 14}c^2+87078.1n^{\frac 14}c+5733.56n^{\frac 14}c^2+1.8186n^{\frac 78}c
  \notag\\[6pt]
  &\quad+1363.79\overline{C}_4n^{\frac 14}c+1446.53 \overline{C}_2n^{\frac 14}c\bigg|.
\end{align}
We note that for $n\geq2$,
\begin{equation*}
\frac {\sqrt{n}}{\sqrt{n}-1}\leq \frac {\sqrt{2}}{\sqrt{2}-1}\leq 3.415.
\end{equation*}
By Theorem \ref{p-bound-thm}, the lower bound of $\p(n)^{-1}$ can be estimated as
\begin{align*}
\p(n)^{-1}\leq 8n\frac {\sqrt{n}}{\sqrt{n}-1}e^{-\pi\sqrt{n}}
\leq 27.32ne^{-\pi\sqrt{n}}.
\end{align*}
Combining above inequality,
we have for $c>2$
\begin{align}\label{R-bound-1}
\left|\frac{\R(a,c,n)}{\p(n)}\right|\leq & 4.44c e^{\left(\frac 1c -1\right)\pi\sqrt{n}}n^{\frac 54}+(0.73c+5.81)ce^{-\frac 4c \pi\sqrt{n}}n^{\frac 54}+e^{-\pi\sqrt{n}}\left[42201.8c^2 n^{\frac 34}\right.
   \notag \\
  &+156641c^2n^{\frac 54}+2.379\times10^6 cn^{\frac 54}+49.69c n^{\frac {15}8}+37258.7c\overline{C}_4n^{\frac 54}
  \notag \\
  &\left.
  +39519.2c \overline{C}_2n^{\frac 54}\right].
\end{align}

For $c=3$, recalling the definition of $C_2$ in \eqref{C2}, then using \eqref{over-est}, we compute that $C_2< 4.5303\times10^{52}$. Similarly, using \eqref{C4}, we have $C_4< 1.0535\times10^{8}$. So replacing $\overline{C}_2$ and $ \overline{C}_4$ in \eqref{R-bound-1} with $ 4.5303\times10^{52}$ and $ 1.0535\times10^{8}$,
we obtain that
\begin{align}\label{R-bound-3}
  \left|\frac{\R(a,3,n)}{\p(n)}\right|
  \leq & 13.32 e^{\frac{-2\pi\sqrt{n}}{3}}n^{\frac 54}+24e^{-\frac 43 \pi\sqrt{n}}n^{\frac 54}+e^{-\pi\sqrt{n}}\left(379816.2 n^{\frac 34}\right. \notag\\
  &\quad \quad \quad\left.+5.3711\times 10^{57}n^{\frac 54} +149.07n^{\frac {15}8}\right):=R_3.
\end{align}
Plugging \eqref{R-bound-3} into \eqref{N-R}, we have
\begin{align}\label{bound-p-R-3}
\p(n)\left(\frac 13-R_3\right)<\N(a,3,n)<\p(n)\left(\frac 13+R_3\right).
\end{align}
Note that $R_3$ is decreasing for $n>1$, then we have
\begin{align}\label{bound-R-3}
R_3<0.33142,~~n\geq2089.
\end{align}
Combining \eqref{bound-p-R-3} and \eqref{bound-R-3}, we are led to \eqref{bound-N-3}.

Similar to $c=3$, we first confirm the estimations of $C_2$ and $C_4$.
When $c=4$, $C_2<2.977\times10^{13}$ and $C_4<1.4885\times10^{13}$, for $c=5$, we have $C_2<2.4221\times10^{10}$ and $C_4<4.0102\times10^{19}$.
Then we obtain
\begin{align}\label{R-bound-4}
   \left|\frac{\R(a,4,n)}{\p(n)}\right|
  &\leq 17.76 e^{\frac{-3\pi\sqrt{n}}{4}}n^{\frac 54}+e^{-\pi\sqrt{n}}\left(675228.9 n^{\frac 34}\right.
  \nonumber\\[6pt]
 &\left.\qquad\qquad+6.9244\times 10^{18}n^{\frac 54} +198.76n^{\frac {15}8}\right):=R_4,
\end{align}
and
\begin{align}\label{R-bound-5}
   \left|\frac{\R(a,5,n)}{\p(n)}\right|
  &\leq 69.5 e^{\frac{-4\pi\sqrt{n}}{5}}n^{\frac 54}+e^{-\pi\sqrt{n}}\left(1.0551\times10^6 n^{\frac 34}\right.
  \nonumber\\[6pt]
 &\left.\qquad\qquad
  +7.4708\times 10^{24}n^{\frac 54} +248.45n^{\frac {15}8}\right):=R_5.
\end{align}
Substituting \eqref{R-bound-4} and \eqref{R-bound-5} into \eqref{N-R}, respectively, we have
\begin{align}\label{bound-p-R-4}
&\p(n)\left(\frac 14-R_4\right)<\N(a,4,n)<\p(n)\left(\frac 14+R_4\right),
\\[6pt] \label{bound-p-R-5}
&\p(n)\left(\frac 15-R_5\right)<\N(a,5,n)<\p(n)\left(\frac 15+R_5\right).
\end{align}
Since
\begin{align}\label{bound-R-4}
&R_4<0.24084,~~n\geq272,
\\[6pt] \label{bound-R-5}
&R_5<0.1897,~~~~n\geq449.
\end{align}
Applying the estimates \eqref{bound-R-4} into \eqref{bound-p-R-4} and \eqref{bound-R-5} into \eqref{bound-p-R-5}, we reach \eqref{bound-N-4} and \eqref{bound-N-5}.

Next, we deal with the case $c\geq6$. Here we combine all the terms in \eqref{R-bound-1} except the term of $49.69ce^{-\pi \sqrt{n}}n^{\frac {15}8}$ into an upper bound.
We take the sum of their coefficients, the highest order exponential,
and the highest power of $n$ from the all terms and put them together in one term.
 For $c\geq6$, $4/c<1-1/c$, so we have
\begin{align}\label{R-b6}
  \left|\frac{\R(a,c,n)}{\p(n)}\right|\leq 37259c \exp\left(\frac {4c^2(2c^2+\pi)}{\pi^2}\right)e^{-\frac 4c \pi \sqrt{n}}n^{\frac 54}+49.69ce^{-\pi \sqrt{n}}n^{\frac {15}8}:=R_c.
\end{align}
In view of \eqref{N-R} and \eqref{R-b6}, we get the following bounds of $\N(a,c,n)$
\begin{align}\label{bound-p-R-c}
\p(n)\left(\frac 1c-R_c\right)<\N(a,c,n)<\p(n)\left(\frac 1c+R_c\right).
\end{align}
We claim that
\begin{align}\label{R}
R_c<\frac 1{2c},~~n\geq M_c,
\end{align}
where $M_c$ is defined in \eqref{Mc-bound}.

To verify \eqref{R}, we only need to show that
\begin{align}\label{R-1}
  37259c \exp\left(\frac {4c^2(2c^2+\pi)}{\pi^2}\right)e^{-\frac 4c \pi \sqrt{n}}n^{\frac 54}<\frac 1{4c}
\end{align}
and
\begin{align}\label{R-2}
  49.69ce^{-\pi \sqrt{n}}n^{\frac {15}8}&<\frac 1{4c},
\end{align}
which are equivalent to prove that
\begin{equation}\label{bound-c-1}
  \frac {e^{\frac 4c \pi \sqrt{n}}}{n^{\frac 54}}>4\times 37259c^2 \exp\left(\frac {4c^2(2c^2+\pi)}{\pi^2}\right)
\end{equation}
and
\begin{equation}\label{bound-c-2}
  \frac {e^{\pi \sqrt{n}}}{n^{\frac {15}8}}>4\times 49.69c^2.
\end{equation}
In addiction, we recall the following inequality\cite[P. 76]{Mitrinovic-1964}
\begin{equation*}
  e^x>\left(1+\frac xy \right)^y,~~~x, y>0.
\end{equation*}
Hence, letting $x=\frac 4c \pi \sqrt{n}$, $y=3$ and $x=\pi \sqrt{n}$, $y=4$ respectively, we obtain
\begin{align}\label{bound-c-3}
&\frac {e^{\frac 4c \pi \sqrt{n}}}{n^{\frac 54}}>\frac 1{n^{\frac 54}}\left(1+\frac {4\pi \sqrt{n}}{3c}\right)^3>\frac {4^3\pi^3}{3^3c^3}n^{\frac 14}
\end{align}
and
\begin{align}\label{bound-c-4}
&\frac {e^{\pi \sqrt{n}}}{n^{\frac {15}8}}> \frac 1{n^{\frac {15}8}}\left(1+\frac {\pi \sqrt{n}}{4}\right)^4>\frac {\pi^4}{4^4}n^{\frac 18}.
\end{align}
By \eqref{bound-c-3} and \eqref{bound-c-4}, we know that the inequalities \eqref{bound-c-1} and \eqref{bound-c-2} hold when $n\geq M_c$ and $n\geq {M_c}'$, respectively, where
\begin{align*}
  {M_c}'&:=5.544\times10^{21}c^{16}.
\end{align*}
Furthermore, we see that \eqref{R-1} and \eqref{R-2} hold when $n\geq M_c$ and $n\geq {M_c}'$, respectively.
Therefore, \eqref{R} is verified for $n\geq \max\{M_c, {M_c}'\}=M_c$.
Plugging \eqref{R} into \eqref{bound-p-R-c}, we complete the proof.
\end{proof}

\section{Strict log-subadditivity of $\N(a,c,n)$}\label{proof-thm}
In this section, we present a proof of Theorem \ref{N-con} based on the intermediate inequalities in the previous sections.
\begin{proof}[Proof of Theorem \ref{N-con}]
We begin with the case of $c=3$.
Substituting \eqref{p-bound} into \eqref{bound-N-3}, we obtain that for $n\geq2089$
\begin{equation}\label{N4-bound}
  (0.0019)\frac{1}{8n}\left(1-\frac{1}{\sqrt{n}}\right)e^{\pi \sqrt{n}}<\N(a,3,n)<(0.6648)\frac{1}{8n}\left(1+\frac{1}{\sqrt{n}}\right)e^{\pi \sqrt{n}}.
\end{equation}
Letting $n_2=Cn_1$ for some $C\geq1$, then by \eqref{N4-bound}, we have
\begin{align*}
  \N(a,3;n_1)\N(a,3;n_2)>(0.0019)^2\frac{1}{64Cn_1^2}\left(1-\frac{1}{\sqrt{n_1}}\right)
  \left(1-\frac{1}{\sqrt{Cn_1}}\right)e^{\pi (\sqrt{n_1}+\sqrt{Cn_1})},
\end{align*}
and
\begin{align*}
  \N(a,3;n_1+n_2)<(0.6648)\frac{1}{8(n_1+Cn_1)}\left(1+\frac{1}{\sqrt{n_1+Cn_1}}\right)e^{\pi \sqrt{n_1+Cn_1}}.
\end{align*}
It is sufficient to show that
\begin{equation*}
  T_{n_1}(C)>\log \left(V_{n_1}(C)\right)+\log \left(S_{n_1}(C)\right),
\end{equation*}
where
\begin{align*}
T_{n_1}(C)&:=\pi (\sqrt{n_1}+\sqrt{Cn_1})-\pi \sqrt{n_1+Cn_1},
\\[16pt]
V_{n_1}(C)&:=\frac {0.6648\times8Cn_1}{(0.0019)^2(C+1)},
\\[6pt]
S_{n_1}(C)&:=\frac {1+\frac{1}{\sqrt{n_1+Cn_1}}}{\left(1-\frac{1}{\sqrt{n_1}}\right)\left(1-\frac{1}{\sqrt{Cn_1}}\right)}.
\end{align*}
As function of $C$, it can be shown that $T_{n_1}(C)$ is increasing and $S_{n_1}(C)$ is decreasing for $C\geq1$, and combined with
\begin{equation*}
  V_{n_1}(C)<\frac {0.6648\times8n_1}{(0.0019)^2},
\end{equation*}
it suffices to show that
\begin{align}\label{T4}
   \notag
   T_{n_1}(1)&=2\pi \sqrt{n_1}-\pi \sqrt{2n_1}\\
   &>\log \left(\frac {0.6648\times8n_1}{(0.0019)^2}\right)+\log \left(\frac {1+\frac{1}{\sqrt{2n_1}}}{\left(1-\frac{1}{\sqrt{n_1}}\right)^2}\right).
\end{align}
By a short computation, we find that \eqref{T4} holds for all $n_1\geq 109$.
Hence, if $n_1, n_2\geq 2089$, we have
\begin{align*}
 \N(a,3,n_1+n_2)<\N(a,3,n_1)\N(a,3,n_2).
\end{align*}
It is routine to check that \eqref{N-con-ineq} is true for $c=3$ and $9\leq n\leq2088$.

The proofs of $c=4,5$ are similar to that of $c=3$, and hence, they are omitted.

We proceed to prove the case for $c\geq6$.
Plugging \eqref{p-bound} into \eqref{bound-N-c}, we obtain that
\begin{equation}\label{N5-bound}
\frac{1}{16nc}\left(1-\frac{1}{\sqrt{n}}\right)e^{\pi \sqrt{n}}<\N(a,c,n)<\frac{3}{16nc}\left(1+\frac{1}{\sqrt{n}}\right)e^{\pi \sqrt{n}},~~n\geq M_c,
\end{equation}
where $M_c$ is defined in \eqref{Mc-bound}.

Setting $n_2=Cn_1$ for some $C\geq1$, one can easily find that for $n_1\geq M_c$
\begin{align*}
  \N(a,c;n_1)\N(a,c;n_2)>\frac{1}{256Cn_1^2c^2}\left(1-\frac{1}{\sqrt{n_1}}\right)
  \left(1-\frac{1}{\sqrt{n_2}}\right)e^{\pi (\sqrt{n_1}+\sqrt{Cn_1})},
\end{align*}
and
\begin{align*}
  \N(a,c;n_1+n_2)<\frac{3}{16(n_1+Cn_1)c}\left(1+\frac{1}{\sqrt{n_1+Cn_1}}\right)e^{\pi \sqrt{n_1+Cn_1}}.
\end{align*}
It remains to show that for $n_1\geq M_c$
\begin{equation*}
  T_{n_1}(C)>\log \left(W_{n_1}(C)\right)+\log \left(S_{n_1}(C)\right),
\end{equation*}
where
\begin{align*}
W_{n_1}(C)&:=(48c)\frac {Cn_1}{C+1}.
\end{align*}
Obviously,
\begin{align*}
W_{n_1}(C)<48cn_1.
\end{align*}
Therefore, we only need show
\begin{align}\label{T-6}
   \notag
   T_{n_1}(1)&=2\pi \sqrt{n_1}-\pi \sqrt{2n_1}\\
   &>\log \left(48cn_1\right)+\log \left(\frac {1+\frac{1}{\sqrt{2n_1}}}{\left(1-\frac{1}{\sqrt{n_1}}\right)^2}\right).
\end{align}
If $n_1\geq2$, we obtain
\begin{align}\label{T-b1}
  \log \left(48cn_1\right)+\log \left(\frac {1+\frac{1}{\sqrt{2n_1}}}{\left(1-\frac{1}{\sqrt{n_1}}\right)^2}\right)
  <\log \left(48cn_1\right)+\log \left(\frac {1+\frac{1}{\sqrt{4}}}
  {\left(1-\frac{1}{\sqrt{2}}\right)^2}
  \right)<\log(840cn_1).
\end{align}
Moreover, if $n_1\geq(840c)^2\geq(840\times 6)^2$, then
\begin{equation}\label{T-b2}
   T_{n_1}(1)=2\pi \sqrt{n_1}-\pi \sqrt{2n_1}>2\log (n_1)\geq \log (n_1)+2\log (840c)>\log (840cn_1).
\end{equation}
Combined with \eqref{T-6}, \eqref{T-b1} and \eqref{T-b2}, choosing $n_1, n_2\geq(840c)^2$, we complete the proof.
\end{proof}

\vspace{0.5cm}
 \baselineskip 15pt
{\noindent\bf\large{\ Acknowledgements}} \vspace{7pt} \par
This work was supported by the National Natural Science Foundation of China (Grant Nos. 12001182 and 12171487),  the Fundamental Research Funds for the Central Universities (Grant No. 531118010411) and Hunan Provincial Natural Science Foundation of China (Grant No. 2021JJ40037).

\end{document}